\documentclass[12pt]{amsart}
\usepackage{amscd, amssymb,latexsym,amsmath, amscd, amsmath}
\usepackage[all]{xy}
\usepackage{anysize}
\usepackage[sc]{mathpazo} 
\usepackage{color}
\marginsize{2.5cm}{2.5cm}{2.5cm}{2.5cm}
\linespread{1.05}

\newtheorem{theorem}{Theorem}[section] 
\newtheorem{corollary}[theorem]{Corollary}
\newtheorem{proposition}[theorem]{Proposition}
\theoremstyle{definition}




\theoremstyle{remark}
\newtheorem{remark}{Remark}



\begin{document}

\title[Toric periods for a $p$-adic quaternion algebra]{Toric periods for a $p$-adic quaternion algebra}
\author{U. K. Anandavardhanan and Basudev Pattanayak}

\address{Department of Mathematics, Indian Institute of Technology Bombay, Mumbai - 400076, India.}
\email{anand@math.iitb.ac.in}

\address{Department of Mathematics, Indian Institute of Technology Bombay, Mumbai - 400076, India.}
\email{basudev@math.iitb.ac.in}

\subjclass{Primary 20C15; Secondary 20C20, 20C33}

\date{}

\begin{abstract}
Let $G$ be a compact group with two given subgroups $H$ and $K$. Let $\pi$ be an irreducible representation of $G$ such that its space of $H$-invariant vectors as well as the space of $K$-invariant vectors are both one dimensional. Let $v_H$ (resp. $v_K$) denote an $H$-invariant (resp. $K$-invariant) vector of unit norm in a given $G$-invariant inner product $\langle ~,~ \rangle_\pi$ on $\pi$. We are interested in calculating the correlation coefficient 
\[c(\pi;H,K) = |\langle v_H,v_K \rangle_\pi|^2.\]
In this paper, we compute the correlation coefficient of an irreducible representation of the multiplicative group of the $p$-adic quaternion algebra with respect to any two tori. In particular, if $\pi$ is such an irreducible representation of odd minimal conductor with non-trivial invariant vectors for two tori $H$ and $K$, then its root number $\varepsilon(\pi)$ is $\pm 1$ and $c(\pi; H, K)$ is non-vanishing precisely when $\varepsilon(\pi) = 1$. 
\end{abstract}

\maketitle

\section{Introduction}\label{introduction}\label{intro}

Let $G$ be a compact group with two given subgroups $H$ and $K$ such that any irreducible representation $\pi$ of $G$ admits at most a one dimensional space of invariant vectors for $H$ as well as $K$. Let $v_H$ (resp. $v_K$) denote an $H$-invariant (resp. $K$-invariant) vector of unit norm in a fixed $G$-invariant inner product $\langle ~,~ \rangle_\pi$ on $\pi$. Then the correlation coefficient of $H$ and $K$ in $\pi$ is defined as
\[c(\pi;H,K) = |\langle v_H,v_K \rangle_\pi|^2.\] 
Note that $c(\pi;H,K)$ is well-defined, since $\langle~,~\rangle_\pi$ is unique up to scalar multiplication, and since $v_H$ and $v_K$ are unique up to a complex number of absolute value $1$. The subgroups $H$ and $K$ are said to be correlated with respect to $\pi$ if $c(\pi;H,K) \neq 0$. The correlation coefficient appears in a variety of contexts and we refer to \cite[\S 8, \S 9]{gro91} and \cite{rob99} for more background material.

In this paper, we compute the correlation coefficient for an irreducible representation $\pi$ of $G=D^\times/F^\times$, where $D$ is the quaternion algebra over a $p$-adic field $F$ with $p$ odd, with respect to any two tori as the given subgroups $H$ and $K$. The restriction of an irreducible smooth representation of $D^\times$ (or ${\rm GL}(2,F)$) to its tori is pioneered by the work of Tunnell \cite{tun83,hsy93,sai93}. Tunnell's results, which are for $(SO(3),SO(2))$, and the work of Prasad for $(SO(4),SO(3))$ \cite{pra90,pra92} are the base cases of more general branching laws and the related Gan-Gross-Prasad conjectures \cite{ggp12}. The results of this paper on the correlation coefficient are in the basic context of Tunnell's results and the striking simplicity of these results (see Theorem \ref{thm-main} and Theorem \ref{thm-main-1}) makes one hope for analogous results in the realm of higher rank branching laws.
   
For an irreducible representation $\pi$ of $D^\times$ to have a fixed vector for any torus, it is necessary that its central character $\omega_\pi$ is trivial. It follows that $\pi$ is self-dual and that its epsilon factor $\varepsilon(1/2,\pi,\Psi)$ is independent of the choice of the additive character $\Psi$ of $F$. This is the local root number which we now denote by $\varepsilon(\pi)$.  By our assumptions, we always have $\varepsilon(\pi) \in \{\pm 1\}$. These local constants of arithmetic significance and their sign are of fundamental importance in the Langlands program and we highlight at this point that one of the key findings of this paper is that the vanishing of the correlation coefficient in some of the situations considered in this paper is determined precisely by the sign of the local root number. We state these as our first results.

\begin{theorem}\label{thm-main}
Let $\pi$ be an irreducible representation of $D^\times$ of odd minimal conductor. Let $S$ and $T$ denote the two non-isomorphic tori for which $\pi$ admits non-trivial fixed vectors. Then the local root number $\varepsilon(\pi)$ is $\pm 1$, and 
\[c(\pi; S, T) \neq 0 \iff \varepsilon(\pi) = 1.\]
Moreover, when $\varepsilon(\pi) = 1$,
\[c(\pi; S, T) = \frac{2}{\dim \pi}.\]
\end{theorem}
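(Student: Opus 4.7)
My plan is to exploit the character-theoretic identity
\[
c(\pi; S, T) \;=\; \operatorname{tr}(P_S P_T) \;=\; \int_S \int_T \chi_\pi(st)\, ds\, dt,
\]
where $P_S,P_T$ are the orthogonal projections onto the (one-dimensional) fixed subspaces $\pi^S,\pi^T$ and $\chi_\pi$ is the character of the finite-dimensional representation $\pi$. The identity is immediate from $P_S=\int_S \pi(s)\,ds$ and $P_T=\int_T \pi(t)\,dt$ (with normalized Haar measures on the compact tori) together with the observation that $\operatorname{tr}\bigl(|v_S\rangle\langle v_S|\cdot|v_T\rangle\langle v_T|\bigr)=|\langle v_S,v_T\rangle|^2$. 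Everything else will be explicit computation of this double integral.

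First, I would identify the two tori concretely. For $p$ odd, $F$ has exactly three quadratic extensions, one unramified and two ramified. Via the standard parameterization of the irreducibles of $D^\times/F^\times$ by admissible pairs $(E,\xi)$ (Jacquet-Langlands / Howe / Bushnell-Henniart), the odd minimal conductor hypothesis forces $\pi$ to be constructed from a character $\xi$ of $E_1^\times/F^\times$ with $E_1/F$ ramified. The Saito-Tunnell dichotomy, applied to each quadratic extension, then forces $S$ and $T$ to be the two tori $E_1^\times/F^\times$ and $E_2^\times/F^\times$ corresponding to the two ramified quadratic extensions of $F$.

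Next, I would write $\pi=\operatorname{c\text{-}Ind}_J^{D^\times}\Lambda$ for a suitable open, compact-mod-center subgroup $J\supset E_1^\times$ and an extension $\Lambda$ of $\xi$, and use Frobenius's formula to express $\chi_\pi$ on regular elliptic elements as a sum of $J$-conjugates of $\Lambda$. For $(s,t)\in S\times T$, the product $st$ typically lies in a regular elliptic conjugacy class whose centralizer is a third quadratic subfield, and the values $\chi_\pi(st)$ therefore collapse to controllable sums of $\xi$-values. Feeding this into the double integral and reorganizing with respect to the filtrations $U^{(n)}_{E_i}$ collapses $c(\pi;S,T)$ to a finite arithmetic sum built from quadratic Gauss sums attached to $\xi$ and to the two ramified extensions. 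By the Fr\"ohlich-Queyrut/Deligne formulas expressing $\varepsilon(\pi)$ as a similar product of Gauss sums, the arithmetic sum evaluates to $(1+\varepsilon(\pi))/\dim\pi$, which gives both the vanishing dichotomy and the explicit value $2/\dim\pi$ when $\varepsilon(\pi)=1$.

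The hardest step, I expect, is this last explicit computation. The delicate point is that $(s,t)\mapsto st$ carries $S\times T$ into elements lying outside both tori, so evaluating $\chi_\pi(st)$ requires choosing coset representatives in $J\backslash D^\times$ conjugating $st$ back into $J$, and tracking the associated Jacobians and volume factors. The clean collapse of a potentially large alternating sum into the very simple expression $(1+\varepsilon(\pi))/\dim\pi$ is essentially a Gauss-sum miracle; the bulk of the work will consist of isolating exactly which Gauss sums appear, reducing them via the Hasse-Davenport-type relations to the Langlands-Deligne root number $\varepsilon(\pi)$, and checking that the normalizing volumes conspire to produce $1/\dim\pi$ as stated.
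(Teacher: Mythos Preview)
Your identification of the two tori $S,T$ is incorrect, and this undermines the remainder of the plan. You assert that the Saito--Tunnell dichotomy forces $S$ and $T$ to be the tori attached to the \emph{two ramified} quadratic extensions of $F$. In fact the opposite holds: when $\pi=\pi_\psi$ has odd minimal conductor (so $\psi$ is a character of a ramified extension $K$), $\pi$ \emph{always} has a nonzero fixed vector for the unramified torus $M=F(\zeta)$, and it has a fixed vector for exactly one of the two ramified tori ($K$ when $q\equiv 3\pmod 4$, $L$ when $q\equiv 1\pmod 4$), never for both. So the pair $(S,T)$ in the theorem is $(M,K)$ or $(M,L)$, one unramified and one ramified. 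This changes the structure of the element $st$ whose character you propose to analyze: rather than a product of two ramified-torus elements, it is a product of an unramified-torus element with a ramified-torus element, and your description of which conjugacy classes arise and which ``third quadratic subfield'' centralizes $st$ no longer applies.

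Beyond this, the approach is genuinely different from the paper's, and the gap between ``plan'' and ``proof'' is large. The paper does not use the character identity $c(\pi;S,T)=\int_S\int_T\chi_\pi(st)\,ds\,dt$ at all. Instead it realizes $\pi_\psi=\mathrm{Ind}_H^{D^\times}\psi^\#$ in an explicit induced model, writes down the $M$-fixed vector and the $K$- (or $L$-) fixed vector as concrete finite linear combinations of basis functions $f_{\underline{\alpha},i}$, and computes their inner product directly; the only nontrivial overlap is at $\underline{\alpha}=\underline{0}$ and two values of $i$, giving $|1+\psi(\varpi)|^2/(2\dim\pi)$. The identification $\varepsilon(\pi)=\psi(\varpi)$ then follows from G\'erardin--Kutzko and Fr\"ohlich--Queyrut, with no Gauss sums, Hasse--Davenport relations, or regular-elliptic character analysis required. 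Your trace-formula route is in principle viable (with the correct tori), but the step you yourself flag as hardest --- the collapse of $\int\int\chi_\pi(st)$ to $(1+\varepsilon(\pi))/\dim\pi$ --- is precisely the content of the theorem, and you have not supplied the mechanism that produces it.
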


We make a remark on the assumption of oddness in Theorem \ref{thm-main}. Under this assumption, the two non-isomorphic tori $S$ and  $T$ admitting fixed vectors for $\pi$ have the property that the normalizer $N(S)$ of $S$ in $D^\times$ intersects $T$ in a non-central element (and vice-versa), and $(S,T)$ is a compatible pair in the sense of Remark \ref{compatible} below. If $\pi$ is of even minimal conductor then the tori admitting fixed vectors need not be compatible and this has to be part of the assumption in order to have an analogous result. 

\begin{theorem}\label{thm-main-1}
Let $\pi$ be an irreducible representation of $D^\times$ of even minimal conductor $f(\pi)$. Let $S$ and $T$ denote two distinct tori for which $\pi$ admits non-trivial fixed vectors. Assume that $N(S)$ intersects $T$ in a non-central element. Then the local root number $\varepsilon(\pi)$ is $\pm 1$, and 
\[c(\pi; S, T) \neq 0 \iff (-1)^{f(\pi)/2}\varepsilon(\pi) = 1.\]
Moreover, when $\varepsilon(\pi) = (-1)^{f(\pi)/2}$,
\[c(\pi; S, T) = \frac{2}{\dim \pi}.\]
\end{theorem}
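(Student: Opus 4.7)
The plan is to adapt the strategy used to establish Theorem \ref{thm-main} to the even-conductor setting. The starting point is the trace identity
\begin{equation*}
c(\pi;S,T) \;=\; \operatorname{tr}(P_S P_T) \;=\; \int_{S/F^\times}\int_{T/F^\times} \chi_\pi(st)\,ds\,dt,
\end{equation*}
where $P_H = \int_{H/F^\times}\pi(h)\,dh$ denotes the rank-one orthogonal projection onto the line of $H$-fixed vectors and each quotient torus carries normalized Haar measure of total mass one. The identity is immediate from $(P_T P_S P_T)v_T = |\langle v_T,v_S\rangle|^2 v_T$, so $\operatorname{tr}(P_S P_T) = c(\pi;S,T)$.

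The next step is to evaluate the character $\chi_\pi$ on products $st$. For this I would exploit the Jacquet--Langlands correspondence, realizing $\pi$ as the transfer of a discrete-series representation of $\mathrm{GL}_2(F)$ attached to an admissible character $\theta$ of a quadratic extension $L^\times$, and invoking the standard character identity expressing $\chi_\pi(x)$ in terms of $\theta(x)+\theta(\bar x)$ on elliptic regular elements conjugate into $L^\times$ (and zero otherwise). The compatibility hypothesis that $N(S)\cap T$ contains a non-central element is crucial: it guarantees that for $s,t$ outside the center, the product $st$ lands in precisely such an elliptic class, so the integrand becomes a character of $L^\times$ pulled back via the multiplication map $S\times T\to D^\times$. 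The double integral then collapses to a finite sum of characters on the quotients of $S/F^\times$ and $T/F^\times$ by a depth filtration of level controlled by $f(\pi)$, which is a Gauss-sum-type expression on the residue field $\mathbb{F}_q$.

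The final matching is with the Tunnell--Saito epsilon-factor formula, which expresses $\varepsilon(\pi)$ as essentially the same sum up to a sign. The main obstacle is to account correctly for the sign $(-1)^{f(\pi)/2}$: in the odd-conductor case of Theorem \ref{thm-main}, the defining character $\theta$ has odd level on the unit filtration of $L^\times$ and the relevant Gauss sum collapses cleanly; in the even-conductor case, however, $\theta$ has even level, and the corresponding Gauss sum acquires a quadratic residue symbol on $\mathbb{F}_q$ that contributes precisely $(-1)^{f(\pi)/2}$. Once this sign is extracted, the integral evaluates to $(-1)^{f(\pi)/2}\varepsilon(\pi)\cdot 2/\dim\pi$, from which both the non-vanishing criterion $(-1)^{f(\pi)/2}\varepsilon(\pi)=1$ and the exact value $2/\dim\pi$ follow at once. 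The delicate technical point is the identification of the specific quadratic Gauss sum appearing in the integral with the one appearing in the root number, and it is this step for which the compatibility assumption on $(S,T)$ is most directly used.
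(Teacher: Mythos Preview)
Your approach differs substantially from the paper's, and as written it has a genuine gap.

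The paper never touches the trace identity or character integrals. It works entirely in the explicit induced model $\pi_\psi=\mathrm{Ind}_H^{D^\times}\theta_\psi^\#$ for $\psi$ a character of the unramified torus $M^\times$: the $K^\times$- and $L^\times$-fixed vectors are written down as concrete finite sums of basis functions (Corollaries~\ref{cor4} and~\ref{cor5}), and their inner product is read off by inspection. The compatibility hypothesis singles out the pair $S=F(\varpi)$, $T=F(\zeta^{(q+1)/2}\varpi)$ (Remark~\ref{rmk-iso-1}), for which one finds $c(\pi;S,T)=|1+\psi(\zeta^{(q+1)/2})|^2/(2\dim\pi)$. The identification with the root number then uses the G\'erardin--Kutzko formula $\varepsilon(\pi_\psi)=(-1)^{f(\psi)}\varepsilon(\psi,\Psi\circ\mathrm{Tr})$ together with Fr\"ohlich--Queyrut's $\varepsilon(\psi,\Psi\circ\mathrm{Tr})=\psi(\Delta)$ for the trace-zero element $\Delta=\zeta^{(q+1)/2}$; since $f(\psi)=f(\pi)/2$ this gives $(-1)^{f(\pi)/2}\varepsilon(\pi)=\psi(\zeta^{(q+1)/2})$, and the theorem follows. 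No Gauss sum ever enters.

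The gap in your sketch is the character step. In the even-conductor case the parametrizing extension is the \emph{unramified} quadratic extension $M$, whereas the tori $S,T$ in the theorem are both \emph{ramified}; the products $st$ you integrate over are therefore generically \emph{not} conjugate into $M^\times$, so the formula ``$\chi_\pi=\theta+\bar\theta$ on the parametrizing torus and zero off it'' does not cover the bulk of the integrand. The actual value of $\chi_\pi$ on such elements comes from the full induced-character formula and is not a two-term sum; your assertion that the double integral ``collapses to a Gauss-sum-type expression on $\mathbb F_q$'' is left unjustified. Your account of the sign $(-1)^{f(\pi)/2}$ as a quadratic residue symbol on $\mathbb{F}_q$ is also incorrect: that sign is precisely the factor $(-1)^{f(\psi)}$ in the G\'erardin--Kutzko identity and depends on the parity of $f(\psi)=f(\pi)/2$, not on $q$. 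Finally, Tunnell--Saito controls the \emph{existence} of a fixed vector for a single torus via an epsilon factor; it does not by itself produce the correlation between two tori, so the ``matching'' you invoke would still require a separate argument that you have not supplied.
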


\begin{remark}\label{dp}
In particular, Theorem \ref{thm-main} (resp. Theorem \ref{thm-main-1}) asserts that the invariant vectors for the two tori are orthogonal precisely when $\varepsilon(\pi) = -1$ (resp. $\varepsilon(\pi) \neq (-1)^{f(\pi)/2}$). One implication follows from \cite[Theorem 4]{pra96} as well. Indeed, by loc. cit., this condition on $\varepsilon(\pi)$ would imply that $\pi$ does not have a fixed vector for $N(S)$. Thus, there exists an element $t \in N(S) \cap T$ that  does not fix the $S$-fixed vector $v_S$. Note that $t \cdot v_S = \alpha v_S$ for some $\alpha \neq 1$. Now,
\[\langle v_S, v_T \rangle = \langle v_S, t^{-1} \cdot v_T \rangle = \langle t \cdot v_S, v_T \rangle = \alpha  \langle v_S, v_T \rangle,\]
and it follows that $c(\pi; S, T) = 0$. 
\end{remark}

The tori in $D^\times$ correspond to quadratic extensions of $F$. By our assumption on the residue characteristic being odd, we have only three distinct such extensions with one of these being unramified. We denote the unramified quadratic extension by $M$ and the ramified quadratic extensions by $K$ and $L$. Let us denote the corresponding tori by the same notations as well. 

The irreducible representations of $D^\times$ are understood by \cite{gg68,ch77,how77b,how77a}, and we refer to \cite{gk80,kz80,bf83} for more details and for the explicit construction of these representations that we make use of in this paper. The essential points are summarized in Section \ref{prelim}. Suffices here to say that they are associated to multiplicative characters of quadratic extensions of $F$ and in this association the representations of even minimal conductor arise out of $M$ and the representations of odd minimal conductor arise out of $K$ or $L$. These constructions are available more generally for any tame division algebra; i.e., when the index of the division algebra and the residue characteristic are relatively prime, and this is the reason why we assume that $p$ is odd. 

Now we state the other main results of this paper. The first one is when $\pi$ has odd minimal conductor and the second one is when $\pi$ has even minimal conductor.

\begin{theorem}\label{thm-odd}
Let $\pi$ be an irreducible representation of $D^\times$ associated to a character $\psi$ of $K^\times$, where $K$ is a ramified quadratic extension of $F$. Assume $\psi$ has trivial restriction to $F^\times$ so that $\psi(\varpi) = \pm 1$. Then,
\begin{enumerate}
\item $c(\pi; M, K) =
\begin{cases}
\frac{2}{\dim \pi}  &\text{if $q \equiv 3 \mod 4$ and $\psi(\varpi) = 1$,} \\
0 &\text{otherwise.} 
\end{cases}$
\item $c(\pi; M, L) = 
\begin{cases}
\frac{2}{\dim \pi}  &\text{if $q \equiv 1 \mod 4$ and $\psi(\varpi) = 1$,} \\
0 &\text{otherwise.} 
\end{cases}$
\end{enumerate}
\end{theorem}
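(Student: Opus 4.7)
The plan is to compute $c(\pi;M,K)$ and $c(\pi;M,L)$ directly from the explicit compact-induction model of $\pi(\psi)$ recalled in Section \ref{prelim}; for ramified $K$, this writes $\pi = \mathrm{ind}_{J}^{D^\times}\tilde\psi$, where $J$ is a compact-mod-centre subgroup containing $K^\times$ and $\tilde\psi$ extends $\psi$. Theorem \ref{thm-main} then serves as a consistency check on the final answer: the non-zero value of $c$, when it occurs, must equal $2/\dim\pi$, and it can arise for at most one of the pairs $(M,K)$ and $(M,L)$ with $M$ occurring in common.

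First I would write down, via Frobenius reciprocity and Mackey, the $E^\times$-fixed subspace of $\pi$ for each $E\in\{M,K,L\}$: it is spanned by explicit functions on $D^\times$ indexed by double cosets $J\backslash D^\times/E^\times$ for which $\tilde\psi$ is trivial on $J \cap xE^\times x^{-1}$. Enumerating these double cosets and the resulting triviality condition yields existence criteria for the fixed vectors $v_M$, $v_K$, $v_L$. The dependence on $q\bmod 4$ and on $\psi(\varpi)$ enters here because the conjugation of $K^\times$ by a uniformizing element of $D^\times$ detects the square class of $-1$ in $\mathbb{F}_q^\times$ (distinguishing $M^\times$ from $L^\times$ via the embedding), while the value $\psi(\varpi) = \pm 1$ controls the triviality of $\tilde\psi$ on the relevant torus intersections.

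Having unit-normalised the resulting fixed vectors, I would next evaluate $\langle v_M, v_K\rangle$ and $\langle v_M, v_L\rangle$ in the $D^\times$-invariant inner product inherited from the induced representation. After the Mackey decomposition, each inner product collapses to a character sum in $\tilde\psi$ along a coset system, and this in turn reduces to a quadratic Gauss sum over the residue field of $F$; combining the Gauss sum with the sign $\psi(\varpi)$ produces the four-way case split in the statement. The main obstacle is the combinatorial bookkeeping: one must identify which double cosets contribute non-trivially, normalise the fixed vectors correctly, and track conjugation by the uniformizer of $D$ through the identification of $M^\times$ versus $L^\times$ inside $D^\times/F^\times$. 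A single sign error at any of these steps would spoil the delicate match between $\psi(\varpi)$, the residue class $q\bmod 4$, and the vanishing/non-vanishing of $c(\pi;M,K)$ and $c(\pi;M,L)$; once these normalisations are pinned down the stated formulas fall out.
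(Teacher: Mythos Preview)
Your overall strategy---realise $\pi$ as a compact induction, use Mackey theory to write down the torus-fixed vectors explicitly, normalise, and compute the inner product---is exactly the paper's approach, and the role you assign to $q\bmod 4$ (governing which ramified torus admits a fixed vector) is correct. Two points deserve correction, however.

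First, Theorem~\ref{thm-main} is not an independent check: in the paper it is \emph{deduced from} Theorem~\ref{thm-odd} by identifying $\psi(\varpi)$ with the root number $\varepsilon(\pi)$ via the G\'erardin--Kutzko and Fr\"ohlich--Queyrut formulae. So you cannot lean on it here.

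Second, and more substantively, your expectation that the inner product ``reduces to a quadratic Gauss sum over the residue field'' is off. The paper's key device is a careful choice of coset representatives $d_{\underline{\alpha},i}$ for $D^\times/H$ (with $H=K^\times(1+\mathcal P_D^{(m+1)/2})$) so that the basis functions $f_{\underline{\alpha},i}$ are orthonormal and the fixed vectors become explicit finite sums of them: $v_M$ is supported on tuples $\underline{\alpha}\in S'$ (odd entries zero) and all $i$, while $v_K$ (resp.\ $v_L$) is supported on $\underline{\alpha}\in S''$ (even entries zero) and just two values of $i$, with a relative sign $\psi(\varpi)$ between them. Since $S'\cap S''=\{\underline{0}\}$, the inner product $\langle v_M,v_K\rangle$ picks out exactly the two common basis vectors and equals $1+\psi(\varpi)$ on the nose; after normalisation this gives $c(\pi;M,K)=|1+\psi(\varpi)|^2/(2\dim\pi)$. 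No character sum or Gauss sum enters. The hard work is entirely in Propositions~\ref{prop1}--\ref{prop3} (verifying that the proposed vectors really are equivariant), and the pairing computation in Section~\ref{correlation} is then a two-line overlap count. If you set up the cosets differently you may well generate spurious exponential sums that ultimately collapse, but the paper's parametrisation avoids this from the start.
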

\begin{remark}\label{trivial}
In the situation of Theorem \ref{thm-odd}, the representation $\pi$ always has an $M$-fixed vector and it has a $K$-fixed vector precisely when $q \equiv 3 \mod 4$ and an $L$-fixed vector precisely when $q \equiv 1 \mod 4$. In particular, $c(\pi;K,L) = 0$, $c(\pi; M, K) = 0$ when $q \equiv 1 \mod 4$, and $c(\pi; M, L) = 0$ when $q \equiv 3 \mod 4$ for trivial reasons.
\end{remark}
\begin{remark}
By Remark \ref{trivial}, it follows from Theorem \ref{thm-odd} that two non-trivial invariant vectors - one for $M$ and the other for either $K$ or $L$ - are orthogonal precisely when a certain sign, namely the value of $\psi(\varpi)$, is $-1$. Thus, a sign captures and characterizes the orthogonality of invariant vectors! And, this sign is nothing but the local root number of $\pi$ (cf. Section \ref{proof-main}).
\end{remark}
\begin{remark}\label{compatible}
Note that $(M,K)$ (and $(M,L)$) is a compatible pair in the sense that any other embedding of the unramified extension $M$ and the ramified extension $K$ will give rise to a pair, say $(M^\prime,K^\prime)$, which is simultaneously $G$-conjugate to $(M,K)$; i.e, there exists $g \in D^\times$ such that $M^\prime = g Mg^{-1}$ and $K^\prime = gKg^{-1}$. Thus, the correlation coefficient $c(\pi;M,K)$ (and $c(\pi;M,L)$) is independent of the choice of the embeddings of the field extensions in $D^\times$.
\end{remark}
\begin{theorem}\label{thm-even}
Let $\pi$ be an irreducible representation of $D^\times$ associated to a character $\psi$ of $M^\times$, where $M$ is the unramified quadratic extension of $F$. Assume $\psi$ has trivial restriction to $F^\times$. Let $K=F(\varpi)$ and $L=F(\zeta\varpi)$, where $\varpi$ is a fixed uniformizer of $D$ and $\zeta$ is a fixed primitive $(q^2-1)$-st root of unity in $D^\times$ with $q$ being the residue cardinality of $F$. Then,
\[c(\pi; K, L) = \frac{|1 + \psi(\zeta)|^2}{2 \dim \pi}.\]
\end{theorem}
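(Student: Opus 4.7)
The plan is to realize $\pi$ explicitly as a compactly induced representation $\pi \simeq \mathrm{Ind}_J^{D^\times}\tilde\psi$ via the construction summarized in Section~\ref{prelim}, where $J \supset M^\times$ is a subgroup of $D^\times$ normalized by $\varpi$, and $\tilde\psi$ is the appropriate extension of $\psi$ from $M^\times$ to $J$. In the induced model, vectors are functions $f\colon D^\times \to \mathbb{C}$ with $f(jg) = \tilde\psi(j)f(g)$, and the inner product is (up to normalization) $\langle f_1, f_2 \rangle = \sum_{g \in J\backslash D^\times} f_1(g)\overline{f_2(g)}$.

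First, I would pin down the $K$-fixed vector $v_K$ via Frobenius reciprocity. Since $\dim \pi^K = 1$, there is a unique double coset $Jg_0 K \in J\backslash D^\times/K$ with $\tilde\psi|_{J\cap g_0Kg_0^{-1}} = 1$, and $v_K$ is supported there. Taking $g_0 = 1$, any element of $J \cap K$ can be factored as $m \cdot u$ with $m \in F^\times$ (using $M^\times\cap K = F^\times$) and $u$ in the pro-$p$ radical of $J$, so $\tilde\psi$ vanishes on $J\cap K$ because $\psi|_{F^\times} = 1$ and $\tilde\psi$ is trivial on the pro-$p$ radical by construction. Since $\varpi \in K$, this yields the relation $v_K(1) = v_K(\varpi)$.

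Next, I would describe the $L$-fixed vector $v_L$ analogously. The key observation is that the generator $\zeta \varpi$ of $L^\times$ factors as $\zeta\cdot \varpi$ with $\zeta \in M^\times \subset J$, so the $L$-invariance condition $v_L(\zeta\varpi) = v_L(1)$ becomes, after using $\tilde\psi(\zeta) = \psi(\zeta)$, the twisted relation $v_L(1) = \psi(\zeta)\,v_L(\varpi)$. Comparing this with $v_K(1) = v_K(\varpi)$ is precisely the source of the factor $1 + \psi(\zeta)$ in the final formula.

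Finally, the inner product $\langle v_K, v_L \rangle = \sum_{g\in J\backslash D^\times} v_K(g)\overline{v_L(g)}$ is computed by grouping cosets into $\langle\varpi\rangle$-orbits, which all have size two: elements of $J$ have even $v_D$-valuation while $v_D(\varpi) = 1$, so $\varpi \notin J$, and $\varpi^2 \in F^\times \subset J$. Each orbit contributes a term proportional to $(1 + \overline{\psi(\zeta)})$, and after normalizing $\|v_K\| = \|v_L\| = 1$ the absolute value squared evaluates to $|1 + \psi(\zeta)|^2/(2\dim\pi)$, which is the claimed formula.

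The main obstacle I anticipate is the higher-depth bookkeeping: for $\psi$ of larger conductor, the index $[D^\times : J]$ grows, $\tilde\psi$ need not be trivial on all of the pro-$p$ radical, and the support of $v_K$ is only a proper double coset rather than all of $D^\times$; the orbit-by-orbit computation requires careful case analysis to verify that the phase $\tilde\psi(g\zeta^{-1}g^{-1})$ contributing to each orbit is uniformly $\psi(\zeta)^{-1}$ up to factors absorbed in the amplitudes. The depth-zero case, where $[D^\times:J] = 2 = \dim \pi$ and both $v_K, v_L$ are supported on all of $D^\times$, illustrates the mechanism transparently and should extend uniformly to the general case.
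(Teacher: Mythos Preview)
Your plan is aligned with the paper's approach---both work in the induced model and isolate the factor $\psi(\zeta)$ from the factorization $\zeta\varpi=\zeta\cdot\varpi$ with $\zeta\in M^\times\subset J$---but there are two genuine gaps.

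First, the claim that ``$\tilde\psi$ is trivial on the pro-$p$ radical by construction'' is false in positive depth: the character $\psi_c$ satisfies $\psi_c(1+\mathcal P_D^{2m})\neq 1$, so your justification for why the identity double coset carries $v_K$ breaks down. You acknowledge this later as an ``obstacle,'' but that means the argument you have actually written covers only the depth-zero case $\dim\pi=2$, whereas the content of the theorem lies in higher depth. The paper handles this not by a clean Mackey statement but by writing down explicit coset representatives $\Delta_{i,\underline\alpha}$ and verifying directly (Propositions~\ref{prop4} and~\ref{prop5}) which basis functions enter $v_K$ and $v_L$; the cancellation in the inner product then reduces to the two cosets $\underline\alpha=\underline 0$, $i\in\{0,1\}$, where the relation $\phi'_{1,\underline 0}=\psi(\zeta^{-1})\phi_{1,\underline 0}$ produces exactly your $1+\psi(\zeta)$.

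Second, and more structurally, when $m$ is odd the inducing datum $\theta_\psi^\#$ is a $q$-dimensional representation, not a character. Your setup with scalar-valued functions $f\colon D^\times\to\mathbb C$ and the Mackey criterion ``there is a unique double coset with $\tilde\psi|_{J\cap g_0Kg_0^{-1}}=1$'' are only meaningful for one-dimensional inducing data. In the $q$-dimensional case the paper works with $W$-valued functions $\phi_{i,\underline\alpha,\alpha_m}$ indexed by an extra parameter $\alpha_m\in\mathbb F$; the $K$-fixed vector is a sum over all $\alpha_m$ while the $L$-fixed vector sits at $\alpha_m=0$ (Corollaries~\ref{cor4} and~\ref{cor5}), and this asymmetry is what makes the normalization come out right. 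Your sketch gives no indication that this case exists or how it would be handled.
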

\begin{remark}\label{trivial1}
In the situation of Theorem \ref{thm-even}, the representation $\pi$ always has fixed vectors for both $K$ and $L$ but not for $M$. Thus, $c(\pi;M,K) = c(\pi;M,L) = 0$, for trivial reasons.
\end{remark}
\begin{remark}\label{rmk-iso}
Observe that $(K,L)$ is not a compatible pair and thus Theorem \ref{thm-even} does depend on the choice of the embeddings of the field extensions. In other words, $c(\pi;K,L)$ depends on the field extensions themselves and is not invariant under $F$-isomorphisms of these extensions. 
\end{remark}
\begin{remark}\label{rmk-iso-1}
Note that Theorem \ref{thm-even} can also be stated for two ramified extensions which are $F$-isomorphic. For instance,
\[c(\pi;F(\varpi),F(\zeta^2\varpi)) =  \frac{|1 + \psi(\zeta^2)|^2}{2 \dim \pi},\]
and
\[c(\pi;F(\varpi),F(\zeta^{\frac{q+1}{2}}\varpi)) = 
\begin{cases}
\frac{2}{\dim \pi} &\text{if $\psi(\zeta^{\frac{q+1}{2}}) = 1$,} \\
0 &\text{if $\psi(\zeta^{\frac{q+1}{2}}) = -1$,} 
\end{cases}
\]
where the two fields are $F$-isomorphic precisely when $q \equiv 3 \mod 4$. The sign $\psi(\zeta^{\frac{q+1}{2}})$ is related to the local root number of $\pi$ and in fact $\psi(\zeta^{\frac{q+1}{2}}) = (-1)^{f(\pi)/2}\varepsilon(\pi)$ (cf. Section \ref{proof-main}).
\end{remark}
\begin{remark}
It is easy to compute the dimension of an irreducible representation $\pi$ of $D^\times$ from its explicit construction and this is given by 
\[\dim \pi = \begin{cases}
2q^{\frac{c-2}{2}} &\text{if $c$ is even,} \\
(q+1)q^{\frac{c-3}{2}} &\text{if $c$ is odd,}
\end{cases}\]
where $c$ is the conductor of $\pi$. 
\end{remark}
We also know how many irreducible representations are there of a given conductor of $D^\times/F^\times$ and this is given by \cite[Proposition 3.5 and Remark on p. 189]{tun78}:
\[\mbox{Number of~} \pi \mbox{~of conductor~} c = \begin{cases}
\frac{q^2-1}{2}q^{\frac{c-4}{2}} &\text{if $c$ is even,} \\
2(q-1)q^{\frac{c-3}{2}} &\text{if $c$ is odd.}
\end{cases}\]
It is an elementary observation to note that \cite[Lemma 2.1]{aj22}
\begin{equation}\label{elem}
\sum_{\pi \in \widehat{G}} \dim \pi \cdot c(\pi;H,K) = \frac{|G||H \cap K|}{|H||K|},
\end{equation}
whenever $G$ is a finite group. It is easy to verify that Theorem \ref{thm-odd} and Theorem \ref{thm-even} are compatible with (\ref{elem}) by considering the finite group $G_c = \frac{D^\times}{F^\times(1+\mathcal P_D^{c-1})}$, and noting that
\[|G_c| = \begin{cases}
2(q+1)q^{3j-1} &\text{if $c=2j+1$,} \\
2(q+1)q^{3j} &\text{if $c=2j+2$},
\end{cases}\]
and 
\[|M_c| = (q+1)q^{\left[\frac{c-2}{2} \right]}, |K_c|=|L_c| = 2q^{(c-2)-\left[\frac{c-2}{2}\right]}, \]
and that all the three subgroups mutually intersect trivially. We refer to Section \ref{schur} for a proof of this verification.

We end the introduction with a few lines about the other results contained in this paper and the proofs of the theorems. Theorem \ref{thm-main} (resp. Theorem \ref{thm-main-1}) follows from Theorem \ref{thm-odd} (resp. Theorem \ref{thm-even}) by appealing to results on epsilon factors due to G\'erardin-Kutzko \cite{gk80} and Fr\"ohlich-Queyrut \cite{fq73}, and we do this in Section \ref{proof-main}. The proofs of Theorem \ref{thm-odd} and Theorem \ref{thm-even} are rather straightforward in that we write down the explicit invariant vectors in the model for an irreducible representation of $D^\times$ as an induced representation (see Section \ref{prelim}). These invariant vectors are given in Corollary \ref{cor1}, Corollary \ref{cor2}, Corollary \ref{cor3}, Corollary \ref{cor4}, and Corollary \ref{cor5}. Making use of these explicit invariant vectors, the corresponding correlation coefficients are calculated in Section \ref{correlation}. In fact, our methods for writing down the invariant vectors prove much more and we can write down equivariant vectors for a large number of characters of the tori. These results are stated as Proposition \ref{prop1}, Proposition \ref{prop2}, Proposition \ref{prop3}, Proposition \ref{prop4}, and Proposition \ref{prop5}. The number of characters for which we can do so is quantified in Remark \ref{rmk1}, Remark \ref{rmk2}, Remark \ref{rmk3}, Remark \ref{rmk4}, and Remark \ref{rmk5}.    

\section{Representations of $D^\times$}\label{prelim}
Let $D$ be the quaternion division algebra over a $p$-adic field $F$, where we assume that $p$ is odd. Let $\mathcal O_D$ (resp. $\mathcal O_F$) be the ring of integers of $D$ (resp. $F$) and let $\mathcal P_D$ (resp. $\mathcal P_F$) denote the unique maximal ideal of $\mathcal O_D$ (resp. $\mathcal O_F$). Let $\zeta$ be a primitive $(q^2 - 1)$-st root of unity in $D$, where $q$ is the residue cardinality of $F$. We denote by $\mathbb D$ the set of coset representatives of $\mathcal{O}_D/\mathcal{P}_D$ given by $\left\langle \zeta \right\rangle \cup \{ 0\}$. Note that a set of coset representatives of $\mathcal O_F/\mathcal P_F$ is given by 
$\mathbb F = \mathbb D \cap F = \{ \zeta^{i(q+1)}: 0 \leq i \leq q-2 \} \cup \{0\}$. We fix a uniformizer $\varpi$ of $D$ such that $\varpi^2 = \varpi_F$ is a uniformizer of $F$ and $\varpi \zeta \varpi^{-1}= \zeta^q$. Let $\mathrm{Tr}_{\mathbb D/\mathbb F}: \mathbb D \rightarrow \mathbb F$ be the trace map and let $\mathbb T = \{\gamma \in \mathbb D \mid \mathrm{Tr}_{\mathbb D/\mathbb F}(\gamma) = 0\}$. Note that $\mathbb D = \mathbb T \bigoplus \mathbb F$.

The irreducible representations of $D^\times$ are classified in \cite{gg68}. These representations, in fact not just for quaternions but for tame division algebras of any degree, can be constructed in two different ways (see \cite[Chapter 8]{bf83} for more details); via the theory of admissible pairs due to Corwin and Howe \cite{ch77} and via a more elementary and explicit approach due to Koch and Zink \cite{kz80}. For the purposes of this paper, it is the latter approach which is most suitable and we summarize the constructions in \cite{kz80,bf83} and here our exposition closely follows \cite[\S1]{hsy93}. In the quaternionic case, these latter constructions are also there in \cite[\S 3.4, \S 3.5, \S 3.6, \S 4.5]{gk80}.

The construction of an irreducible representation of $D^\times$ involves a character $\chi$ of $E^\times$ where $E$ is a quadratic extension of $F$. Recall that the conductor $f(\chi)$ of $\chi$ is the smallest non-negative integer $\nu$ such that $\rho|_{1 + \mathcal{P}_E^{\nu}} = 1$. Let $\mathrm{Nm}_{E/F}$ and $\mathrm{Tr}_{E/F}$ be the norm and trace maps from $E$ to $F$. A character $\chi$ of $E^\times$ is called minimal if $f(\chi) \leq f(\chi \cdot \phi \circ \mathrm{Nm}_{E/F})$ for all characters $\phi$ of $F^\times$. We also fix an additive character $\psi_0$ of $F$ of conductor $0$ and thus $\psi_0(\mathcal{O}_F) = 1$ but $\psi_0(\mathcal{P}_F^{-1}) \neq 1$. Define $\psi_E = \psi_0 \circ {\rm Tr}_{E/F}$ and $\psi_D = \psi_0 \circ {\rm Tr}_{D/F}$, where $\mathrm{Tr}_{D/F}$ is the reduced trace.

If $\pi$ is an irreducible representation of $D^\times$ then its conductor $f(\pi)$ is the smallest non-negative integer $\nu$ such that $\rho|_{1 + \mathcal{P}_D^{\nu-1}} = 1$. The representation $\pi$ is said to be minimal if $f(\pi) \leq f(\pi \otimes \phi \circ \mathrm{Nm}_{D/F})$, where $\mathrm{Nm}_{D/F}$ is the reduced norm. Suppose $\pi$ arises out of $(E,\chi)$ and is minimal then it turns out that $f(\pi)$ is odd if and only if $E/F$ is ramified. We say that $\pi$ is ramified (resp. unramified) when $E/F$ is ramified (resp. unramified).

\subsection{Ramified representations} For more details of the construction in this case, we refer to \cite[p. 133]{hsy93}. Suppose $K$ is a ramified quadratic extension of $F$ whose ring of integers is $\mathcal O_K$ with its unique maximal ideal $\mathcal P_K$. Let $\psi$ be a minimal character of $K^\times$ with even conductor. We write $f(\psi) = m+1$ where $m$ is an odd integer. In this case, we have $f(\psi|_{F^\times}) \leq f(\psi)/2$. 

There exists $c \in \mathcal P_K^{-m-2}$ such that $\psi(1+x) = \psi_K(cx)$ for $1+x \in 1+\mathcal P_K^{\frac{m+1}{2}}$. Now define the character $\psi_c$ of $1+\mathcal P_D^{\frac{m+1}{2}}$ by $\psi_c(1+x) = \psi_D(cx)$. Note that $\psi_c(1+\mathcal P_D^m) \neq 1$. and $\psi_c(1+\mathcal P_D^{m+1}) = 1$. The inertial subgroup of $\psi_c$ is $H = K^\times(1+\mathcal P_D^{\frac{m+1}{2}})$. Extend $\psi_c$ to $H$ by $\psi^\#(y (1+x)) = \psi(y)\psi_c(1+x)$.

Then the representation \[\pi_\psi = {\rm Ind}^{D^\times}_{H} \psi^\# \]
is irreducible with $f(\pi_\psi) = m+2$ and its central character is $\psi|_{F^\times}$. Note that $\dim \pi_\psi = |D^\times/H| = (q+1)q^{\frac{m-1}{2}}$. Any minimal irreducible representation of $D^\times$ of odd conductor arises this way from one of the two ramified quadratic extensions of $F$.

\subsection{Unramified representations} For more details of the construction in this case, we refer to \cite[p. 126]{hsy93}. Suppose $M$ is the unramified quadratic extension of $F$ whose ring of integers is $\mathcal O_M$ with its unique maximal ideal $\mathcal P_M$. Let $\psi$ be a minimal character of $M^\times$ with conductor $\geq 1$. We write $f(\psi) = m+1$. In this case, we have $f(\psi|_{F^\times}) \leq f(\psi)$. 

There exists $c \in  \mathcal P_M^{-m-1} - \mathcal P_M^{m+1} \cap (F + \mathcal P_M^{-m})$ such that $\psi(1 + x)= \psi_M(cx)$ for each $x \in \mathcal P_M^{[\frac{m+2}{2}]}$. We also have $M=F(c)$. Define the character $\psi_c$ of $1+\mathcal P_D^{m+1}$ by $\psi_c(1+x) = \psi_D(cx)$. Note that $\psi_c(1+\mathcal P_D^{2m}) \neq 1$. and $\psi_c(1+\mathcal P_D^{2m+1}) = 1$.  The inertial subgroup of $\psi_c$ is $H = M^\times(1+\mathcal P_D^m)$.

We define a representation $\theta_\psi^\#$ of $H$ as follows.
\begin{enumerate}
\item
When $m$ is even, note that $M^\times (1 + \mathcal P_D^m) = M^\times (1 + \mathcal P_D^{m+1})$. Now $\psi_c$ is extended by setting  $\theta_\psi^\#(y(1+x)) = \psi(y) \psi_c(1+x)$. 
\item
When $m$ is odd, let $H^{\prime \prime} = F^\times (1 +\mathcal P_M) ( 1 + \mathcal P_D^{m+1})$ and $H^\prime = F^\times (1 +\mathcal P_M) ( 1 + \mathcal P_D^m)$. We first extend $\psi$ to a character $\psi^{\prime \prime}$ of $H^{\prime \prime}$ by setting $\psi^{\prime \prime}(a(1+y)(1 + x)) = \psi (a) \psi(1+y) \psi_c(1+x)$. Now $H^{\prime \prime}$ is a normal subgroup of $H^\prime$ with $H^\prime/H^{\prime \prime}$ being an elementary abelian $p$-group with order $q^2$. Using \cite[Proposition 8.3.3]{bf83}, we have a maximal abelian subgroup $H^\prime_1$ of $H^\prime$ containing $H^{\prime \prime}$ such that extending $\psi^{\prime \prime}$ to a character $\psi^\prime$ of $H^\prime_1$, there exists a uniquely determined irreducible $q$-dimensional representation $\theta_\psi$ of $H^\prime$ defined by $\theta_\psi = \mathrm{Ind}^{H^\prime}_{H_1^\prime} \psi^\prime$, and $\theta_\psi|{H^{\prime \prime}}$ contains $\psi^{\prime \prime}$. Then by \cite[Proposition 8.4.1]{bf83}, this representation $\theta_\psi$ can be extended to an irreducible degree $q$ representation $\theta_\psi^\#$ of $H$ uniquely under the condition  $\mathrm{det}[\theta_\psi^\#|_{\langle \zeta \rangle}]= \psi^q|_{\langle \zeta \rangle}$.
\end{enumerate}

Then the representation \[\pi_\psi = {\rm Ind}^{D^\times}_{H} \theta_\psi^\# \]
is irreducible with $f(\pi_\psi) = 2m+2$ and its central character is $\psi|_{F^\times}$. Note that $\dim \pi_\psi = |D^\times/H| \dim \theta_\psi^\#= 2q^m$. Any minimal irreducible representation of $D^\times$ of even conductor arises this way from the unramified quadratic extension of $F$.

\section{Construction of invariant vectors: Ramified Case}\label{ramified}

Let $K/F$ be a quadratic ramified extension and let $H = K^\times (1+\mathcal P_D^{\frac{m+1}{2}})$, where $m$ is an odd positive integer. Let 
\[\mathcal S = \left\{ (\alpha_1, \alpha_2,\dots, \alpha_{\frac{m-1}{2}}) \mid \alpha_r \in \mathbb F, 1 \leq r \leq \frac{m-1}{2} \right\}.\] 
In what follows, by $S^\prime$ (resp. $S^{\prime \prime}$) we denote the subset of $\mathcal{S}$ consisting of those tuples $\underline{\alpha}=(\alpha_1, \alpha_2,\dots,\alpha_{\frac{m-1}{2}})$ where $\alpha_r =0$ when $r$ is odd (resp. even). 

\subsection{Restriction to the unramified torus}

Let $M$ be the unramified quadratic extension over $F$. Then $M$ is generated by $\zeta$ over $F$. For each integer $i$ with $0 \leq i \leq q$ and $\underline{\alpha}=(\alpha_1, \alpha_2,..., \alpha_{\frac{m-1}{2}}) \in \mathcal S$, let
\[d_{\underline{\alpha}, i}= (1 + \sum\limits_{r=1}^{\frac{m-1}{2}} \alpha_r \zeta^{\frac{q+1}{2}} \varpi^r) \cdot \zeta^i.\]
Then the set $\{d_{\underline{\alpha}, i} \mid \underline{\alpha} \in \mathcal S, 0 \leq i \leq q\}$ forms a set of coset representatives for $D^\times/H$; i.e., 
\[D^\times = \bigsqcup_{\substack{\underline{\alpha} \in \mathcal{S}  \\    0 \leq i \leq q }}    H \cdot d_{\underline{\alpha}, i}.\]
For each pair $(\underline{\alpha},i)$ with $\underline{\alpha} \in \mathcal S$ and $0 \leq i \leq q$, we define a complex valued function on $D^\times$ by
\[f_{\underline{\alpha}, i}(h \cdot d_{\underline{\beta}, j}) = \begin{cases}
\psi^\#(h) &\text{if $\underline{\beta} = \underline{\alpha}$ and $j = i$,} \\
0 &\text{otherwise,}
\end{cases} \]
for $h \in H$.
Note that 
\[\{f_{\underline{\alpha}, i} \mid \underline{\alpha} \in \mathcal S, 0 \leq i \leq q \}\] forms a basis for the irreducible representation $\pi_\psi=\mathrm{Ind}^G_{H} \psi^\#$.

\begin{proposition}\label{prop1}
Let $\mu: M^\times \rightarrow \mathbb C^\times$ be a smooth character of $M^\times$ with $\mu|_{H \cap M^\times}=\psi^\#|_{H \cap M^\times}$. Then $\mu$ occurs with multiplicity one in the restriction of $\pi_\psi$ to $M^\times$ and the vector 	
\[v_{\mu}= \sum\limits_{\substack{\underline{\alpha} \in S^\prime  \\    0 \leq i \leq q }} \mu(d_{\underline{\alpha}, i}) f_{\underline{\alpha}, i} \]
lies in the $\mu$-isotypic component of the representation $\pi_\psi$.
\end{proposition}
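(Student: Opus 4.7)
The plan is to verify three claims: (i) the formula for $v_\mu$ is meaningful, (ii) $v_\mu$ lies in the $\mu$-isotypic component of $\pi_\psi|_{M^\times}$, and (iii) $\mu$ appears with multiplicity one in this restriction.

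First, I would observe that for $\underline{\alpha}\in S^\prime$ the element $d_{\underline{\alpha},i}$ lies in $M^\times$, so that $\mu(d_{\underline{\alpha},i})$ makes sense. Indeed, the defining condition of $S^\prime$ forces $\alpha_r=0$ for odd $r$, so only even powers $\varpi^{2s}=\varpi_F^{s}\in F^\times$ appear in $1+\sum_r\alpha_r\zeta^{(q+1)/2}\varpi^r$; combined with $\zeta^{(q+1)/2},\zeta^i\in M^\times$, this places $d_{\underline{\alpha},i}\in F(\zeta)^\times=M^\times$.

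The key structural input that I would establish next is that $\{d_{\underline{\alpha},i}:\underline{\alpha}\in S^\prime,\ 0\leq i\leq q\}$ is a complete set of coset representatives for $M^\times/(H\cap M^\times)$. These elements lie in $M^\times$ by the previous paragraph and in pairwise distinct $H$-cosets of $D^\times$ by the given decomposition of $D^\times$, hence also in pairwise distinct $(H\cap M^\times)$-cosets of $M^\times$. Exhaustiveness reduces to the count $[M^\times:H\cap M^\times]=(q+1)|S^\prime|$, which I would verify by working in the finite quotient $D^\times/F^\times(1+\mathcal{P}_D^{m+1})$ using the orders recalled after (\ref{elem}). As a consequence, for each $y\in M^\times$ and $(\underline{\alpha},i)\in S^\prime\times\{0,\dots,q\}$ there is a unique decomposition $d_{\underline{\alpha},i}\,y=h_{y,\underline{\alpha},i}\cdot d_{\sigma_y(\underline{\alpha},i)}$ with $h_{y,\underline{\alpha},i}\in H\cap M^\times$ and $\sigma_y(\underline{\alpha},i)\in S^\prime\times\{0,\dots,q\}$, and $\sigma_y$ is a permutation of $S^\prime\times\{0,\dots,q\}$.

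Given this, the equivariance is a direct calculation. The decomposition yields $y\cdot f_{\sigma_y(\underline{\alpha},i)}=\psi^\#(h_{y,\underline{\alpha},i})\,f_{\underline{\alpha},i}$, so reindexing the sum gives $y\cdot v_\mu=\sum_{(\underline{\alpha},i)}\mu(d_{\sigma_y(\underline{\alpha},i)})\,\psi^\#(h_{y,\underline{\alpha},i})\,f_{\underline{\alpha},i}$. Applying $\mu$ to the defining equation (each factor lies in $M^\times$) and invoking the hypothesis $\mu|_{H\cap M^\times}=\psi^\#|_{H\cap M^\times}$ gives $\mu(d_{\sigma_y(\underline{\alpha},i)})\,\psi^\#(h_{y,\underline{\alpha},i})=\mu(d_{\underline{\alpha},i})\,\mu(y)$, and so $y\cdot v_\mu=\mu(y)\,v_\mu$.

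The main obstacle will be the multiplicity-one statement. I plan to apply the Mackey decomposition
$\pi_\psi|_{M^\times}=\bigoplus_{x\in H\backslash D^\times/M^\times}\mathrm{Ind}_{xHx^{-1}\cap M^\times}^{M^\times}((\psi^\#)^x)$. The trivial double coset contributes $\mathrm{Ind}_{H\cap M^\times}^{M^\times}(\psi^\#|_{H\cap M^\times})$, which is precisely the $M^\times$-submodule $W=\mathrm{span}\{f_{\underline{\alpha},i}:\underline{\alpha}\in S^\prime\}$ identified above, and here $\mu$ appears with multiplicity one by Frobenius reciprocity. The remaining and harder point is to rule out any contribution of $\mu$ from the other double cosets $HxM^\times$ with $x\notin HM^\times$; I would handle this either by a direct check that $(\psi^\#)^x|_{xHx^{-1}\cap M^\times}$ cannot agree with $\mu|_{xHx^{-1}\cap M^\times}$, using the minimality of $\psi$, or by appealing to the Saito--Tunnell multiplicity-one theorem for the restriction of an irreducible representation of $D^\times$ to a torus.
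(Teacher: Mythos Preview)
Your argument is correct and is in essence the same strategy as the paper's, but packaged more conceptually. The paper also establishes, in effect, that for $y\in M^\times$ and $\underline{\delta}\in S^\prime$ one has $d_{\underline{\delta},i_0}\,y = b_0 h_0\, d_{\underline{\delta}^\prime,i_0+j_0}$ with $b_0h_0\in H\cap M^\times$ and $\underline{\delta}^\prime\in S^\prime$; however, it does so by writing $y$ in an explicit parametrized form and producing closed formulas for the $\xi_r,\delta_r^\prime,\gamma_r$ appearing in $b_0,h_0,\underline{\delta}^\prime$, whereas you obtain the same decomposition abstractly from the single observation that $\{d_{\underline{\alpha},i}:\underline{\alpha}\in S^\prime,\ 0\le i\le q\}$ is a transversal for $M^\times/(H\cap M^\times)$, verified by the index count $[M^\times:H\cap M^\times]=(q+1)q^{[(m-1)/4]}$. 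Your route avoids the page of explicit recursions and makes the equivariance transparent; the paper's route has the virtue of being entirely self-contained and not relying on the index computation. For multiplicity one, the paper simply says ``elementary Mackey theory'' without further comment; your plan to check the remaining double cosets via minimality of $\psi$, or to invoke Tunnell--Saito, is a legitimate way to make that step precise.
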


\begin{proof}
The first part of the assertion is just elementary Mackey theory. We show that $v_\mu$ is a $\mu$-equivariant vector. To this end, fix $x=h \cdot d_{\underline{\delta}, i_0} \in D^\times$, where $h \in H$, $\underline{\delta}=(\delta_1,\dots,\delta_{\frac{m-1}{2}}) \in \mathcal{S}$, and $i_0 \in \{0,1,\dots,q\}$. Clearly,
\[v_{\mu}(x) = \sum\limits_{\substack{\underline{\alpha} \in S^\prime  \\    0 \leq i \leq q }}\mu(d_{\underline{\alpha}, i}) f_{\underline{\alpha}, i}(h \cdot d_{\underline{\delta}, i_0}) = 
\begin{cases}
\psi^\#(h)\mu(d_{\underline{\delta}, i_0})  &\text{if $\underline{\delta} \in S^\prime$,}  \\
0 &\text{otherwise.}\end{cases}\]
If $z \in M^\times$ then we write 
\[z= a \cdot \zeta^{j_0} \cdot (1 + \sum\limits_{\substack{r \geq 2  \\    r \text{ is even} }}\beta_r \cdot  \zeta^{\frac{q+1}{2}} \cdot \varpi^r ) \]
for some $a \in F^\times$, $j_0 \in \{0,1,\dots,q\}$, and $\beta_r \in \mathbb F$ for all $r$. Then,
\begin{align*}
\pi_\psi(z) v_{\mu}(x) &= v_{\mu}(xz) \\
&= v_{\mu}\left(ah\left(1 + \sum\limits_{r=1}^{\frac{m-1}{2}} \delta_r \zeta^{\frac{q+1}{2}} \varpi^r \right) \cdot \left(1 + \sum\limits_{\substack{r \geq 2  \\    r \text{ is even} }}\beta_r \cdot  \zeta^{\frac{q+1}{2}} \cdot \varpi^r \right) \zeta^{i_0 + j_0} \right).
\end{align*}
Observe that the element 
\[\textbf{y} = \left(1 + \sum\limits_{r=1}^{\frac{m-1}{2}} \delta_r \zeta^{\frac{q+1}{2}} \varpi^r \right) \cdot \left(1 + \sum\limits_{\substack{r \geq 2  \\    r \text{ is even} }}\beta_r \cdot  \zeta^{\frac{q+1}{2}} \cdot \varpi^r \right) \zeta^{i_0 + j_0}\]
lies in the support of $v_\mu$ if and only if $\delta_r = 0$ for all odd integers $r \in \{1,\dots,\frac{m-1}{2} \}$. Thus, $v_\mu(\textbf{y})=0$ if $\underline{\delta} \notin S^\prime$. 

Now assume that $\underline{\delta} \in S^\prime$. Let $m^\prime$ be the largest even integer in $\{1,2,\dots,\frac{m-1}{2} \}$. Then, the element 
\[\textbf{y} = \left(1 + \sum\limits_{\substack{r = 2  \\    r \text{ is even} }}^{m^\prime} \delta_r \zeta^{\frac{q+1}{2}} \varpi^r \right) \cdot \left(1 + \sum\limits_{\substack{r \geq 2  \\    r \text{ is even} }}\beta_r \cdot  \zeta^{\frac{q+1}{2}} \cdot \varpi^r \right) \zeta^{i_0 + j_0}\]
can be written as 
\[\textbf{y}= b_0 \cdot h_0 \cdot d_{\underline{\delta}^\prime, i_0 + j_0}\] 
where
\[b_0 =  1 + \sum\limits_{\substack{r = 4  \\    r \text{ is even} }}^{m^\prime} \xi_r  \cdot \varpi^r \] 
with
\[\xi_r = \left(\sum\limits_{\substack{ t = 2  \\    t \text{ is even} }}^{r-2} \delta_t \beta_{r-t} \right) \zeta^{q+1} \]
for $4 \leq r \leq m^\prime$, and
\[\underline{\delta}^\prime = (\delta_1^\prime,...,\delta_{\frac{m-1}{2}}^\prime) \in S^\prime\]
with
\[
\delta_r^\prime = \delta_r + \beta_r - \sum\limits_{\substack{t = 2  \\    t \text{ is even} }}^{r-2} \delta_t^\prime \xi_{r-t} 
\]
for even $r \leq m^\prime$, and
\[h_0 = 1 + \sum\limits_{\substack{r = m^\prime + 2  \\    r \text{ is even}}}^{m-1} (\gamma_r \zeta^{\frac{q+1}{2}} + \xi_r)\varpi^r +\cdots \]
with
\[\gamma_r = \beta_r - \sum\limits_{\substack{ t = 2  \\    t \text{ is even} }}^{m^\prime} \delta_t^\prime \xi_{r-t} - \sum\limits_{\substack{ t =m^\prime +2  \\    t \text{ is even} }}^{r-2} \gamma_t \xi_{r-t}  \in \mathbb F,\]
for $r \geq m^\prime+2$, and
\[\xi_r = \left(\sum\limits_{\substack{ t = 2  \\    t \text{ is even} }}^{m^\prime} \delta_t \beta_{r-t} - \sum\limits_{\substack{ t = m^\prime+2  \\    t \text{ is even} }}^{r-2} \delta_{r-t}^\prime \gamma_t \right)\zeta^{q+1} 
- \sum\limits_{\substack{ t = m^\prime+2  \\    t \text{ is even} }}^{r-2} \xi_t \xi_{r-t} \]
for $m^\prime+2 \leq r \leq m-1$. 

Observe that $b_0 \in F^\times$ and $h_0 \in H \cap M^\times$. By our assumption on $\mu$, we have $\psi^\#(h_0) = \mu(h_0)$. Recall our assumption that $\underline{\delta} \in \mathcal S^\prime$. Now, 
\begin{align*}
 \pi_\psi(z) v_{\mu}(x) &= v_\mu \left(ah \cdot b_0 \cdot h_0 \cdot d_{\underline{\delta^\prime}, i_0 + j_0} \right) \\
&= \psi^\#(h) \psi (a \cdot b_0) \psi^\#(h_0) \mu(d_{\underline{\delta^\prime}, i_0 + j_0})  \\
&= \psi^\#(h) \mu(a \cdot b_0 \cdot h_0) \mu(d_{\underline{\delta^\prime}, i_0 + j_0})  \\
&= \psi^\#(h) \mu(z \cdot d_{\underline{\delta}, i_0}) \\
&= \mu(z) v_{\mu}(x).
\end{align*}
Note that we have used the identity $a \cdot b_0 \cdot h_0 \cdot d_{\underline{\delta^\prime}, i_0 + j_0} =  d_{\underline{\delta}, i_0} \cdot z$ in the last but one step. Thus,
\[\pi_\psi(z) v_{\mu}= \mu(z) v_{\mu}\]
for $z \in M^\times$.
\end{proof}

\begin{remark}\label{rmk1}
Observe that \[H \cap M^\times = F^\times (1 + \mathcal P_M^{[\frac{m-1}{4}]+1}).\] 
and thus we are able to exhibit $\mu$-equivariant vectors in $\pi_\psi$ for $(q+1)q^{[\frac{m-1}{4}]}$ many characters $\mu$.
\end{remark}

If $\pi_\psi$ has trivial central character then the trivial character of $M^\times$ always occurs in the restriction $\pi_\psi$ to $M^\times$. Putting $\mu=1$ in the above proposition, we get 

\begin{corollary}\label{cor1}
If $\pi_\psi$ has trivial central character, an $M^\times$-fixed vector in $\pi_\psi$ is given by
\[v_M = \sum\limits_{\substack{\underline{\alpha} \in S^\prime  \\    0 \leq i \leq q }} f_{\underline{\alpha}, i}. \]
\end{corollary}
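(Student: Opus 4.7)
The plan is to deduce this immediately from Proposition \ref{prop1} by taking $\mu$ to be the trivial character of $M^\times$. With this specialization, every coefficient $\mu(d_{\underline{\alpha}, i})$ in the formula for $v_\mu$ becomes $1$, so that $v_\mu$ collapses to the stated vector $v_M$, and the $\mu$-equivariance $\pi_\psi(z) v_\mu = \mu(z) v_\mu$ becomes the required $M^\times$-invariance.

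The only thing to verify is the hypothesis of Proposition \ref{prop1}, namely $\psi^\#|_{H \cap M^\times} = 1$. By Remark \ref{rmk1}, $H \cap M^\times = F^\times(1 + \mathcal P_M^{[\frac{m-1}{4}]+1})$, so this splits into two checks. On $F^\times$, the restriction $\psi^\#|_{F^\times} = \psi|_{F^\times}$ is the central character $\omega_{\pi_\psi}$, which is trivial by assumption. On the pro-$p$ piece $1 + \mathcal P_M^{[\frac{m-1}{4}]+1}$, first observe that $M/F$ unramified gives $\mathcal P_M \subset \mathcal P_D^2$, and a numerical check $2([\frac{m-1}{4}]+1) \geq \frac{m+1}{2}$ shows that this subgroup lies inside $1 + \mathcal P_D^{\frac{m+1}{2}}$. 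On the latter, $\psi^\#$ coincides with $\psi_c$, given by $\psi_c(1+u) = \psi_0(\mathrm{Tr}_{D/F}(cu))$.

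The remaining step, which I expect to be the main technical obstacle, is to show $\mathrm{Tr}_{D/F}(cu) \in \mathcal O_F$ for all $u \in \mathcal P_M^{[\frac{m-1}{4}]+1}$. A naive valuation bound is insufficient because $v_D(c)$ is as low as $-m-2$. Instead, I would use the decomposition $D = M \oplus M \varpi_D$, where $\varpi_D$ can be chosen to lie in $K$ and satisfy $\varpi_D x \varpi_D^{-1} = x^\sigma$ for $x \in M$, together with the vanishing $\mathrm{Tr}_{D/F}(z \varpi_D) = 0$ for any $z \in M$. Writing $c = a + b \varpi_D$ with $a, b \in F$, a direct computation yields $\mathrm{Tr}_{D/F}(cu) = a \cdot \mathrm{Tr}_{M/F}(u)$. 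The desired integrality then follows from bounds on $v_F(a)$ exploiting the freedom to choose $c$ in its coset class modulo $\mathcal P_K^{-(m+1)/2}$ (e.g.\ taking $c$ of reduced trace zero, which forces $a = 0$). Once this is verified, Proposition \ref{prop1} applied with $\mu = 1$ yields the stated expression for $v_M$ as an $M^\times$-fixed vector.
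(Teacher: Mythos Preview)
Your approach is exactly the paper's: specialize Proposition~\ref{prop1} to $\mu=1$, so that every coefficient becomes $1$ and $\mu$-equivariance becomes $M^\times$-invariance. The paper's proof is the single line ``Putting $\mu=1$ in the above proposition'' and does not verify the hypothesis $\psi^\#|_{H\cap M^\times}=1$ at all; you are more careful in attempting this, and your reduction to $\mathrm{Tr}_{D/F}(cu)\in\mathcal O_F$ via the identity $\mathrm{Tr}_{D/F}(cu)=a\,\mathrm{Tr}_{M/F}(u)$ (writing $c=a+b\varpi$ with $a,b\in F$) is correct.

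One clarification on your final step: the coset freedom in $c$ (which is modulo $\mathcal P_K^{-(m+3)/2}$, not $\mathcal P_K^{-(m+1)/2}$) does not by itself let you take $a=0$, since a priori $v_F(a)$ could be as negative as $-(m+1)/2$, well outside that range. The missing input is again the trivial-central-character hypothesis, now applied on the $F$-piece of $1+\mathcal P_K^{(m+1)/2}$: for $x\in\mathcal P_F^{\lceil(m+1)/4\rceil}\subset\mathcal P_K^{(m+1)/2}$ one has $1=\psi(1+x)=\psi_K(cx)=\psi_0(2ax)$, which forces $a\in\mathcal P_F^{-\lceil(m+1)/4\rceil}$. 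Since $\mathrm{Tr}_{M/F}(u)\in\mathcal P_F^{[\frac{m-1}{4}]+1}$ and $\lceil\frac{m+1}{4}\rceil=[\frac{m-1}{4}]+1$ for odd $m$, the product $a\,\mathrm{Tr}_{M/F}(u)$ lands in $\mathcal O_F$, completing the verification. (With this bound on $a$ in hand, your suggestion of adjusting $c$ to have reduced trace zero also becomes legitimate, but it is no longer needed.)
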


\subsection{Restricted to ramified tori}

Let $K$ and $L$ be the two ramified extensions of $F$. The representation $\pi_\psi$ is associated to the extension $K$. We need to consider its restriction to $K$ and to $L$. We take $K = F(\varpi)$ and $L=F(\zeta\varpi)$.

\subsubsection{Restriction to $K^\times$}

It is easy to observe that the character $\psi$ occurs with multiplicity one in the restriction $\pi_\psi$ to $K^\times$ and a $\psi$-fixed vector is given by $f_{\underline{0}, 0}$. Similarly, when $g=\zeta^\frac{q+1}{2} \in D^\times$, the character $\psi^g$ occurs with multiplicity one in the restriction of $\pi_\psi$ and a $\psi^g$-fixed vector is given by $f_{\underline{0}, \frac{q+1}{2}}$. In the proposition below, we exhibit many more equivariant vectors.

For each integer $i$ with $0 \leq i \leq q$ and $\underline{\alpha}=(\alpha_1, \alpha_2, \dots,\alpha_{\frac{m-1}{2}}) \in \mathcal{S}$, we define
\[d_{\underline{\alpha}, i}^\prime= (1 + \sum\limits_{r=1}^{\frac{m-1}{2}} \alpha_r \zeta^{i(1-q)} \varpi^r) \cdot \zeta^i.\]
For each pair $(\underline{\alpha},i)$ with $\underline{\alpha} \in \mathcal{S}$ and $0 \leq i \leq q$, we define the corresponding functions on $D^\times$ by 
\[f^\prime_{\underline{\alpha}, i}(h \cdot d^\prime_{\underline{\beta}, j}) = \begin{cases}
\psi^\#(h) &\text{if $\underline{\beta} = \underline{\alpha}$ and $j = i$,} \\
0 &\text{otherwise,}
\end{cases} \]
for $h \in H$.

\begin{proposition}\label{prop2}
For $1 \leq i \leq \frac{q-1}{2}$, pick the element $g=\zeta^i \in D^\times$ and consider a smooth character $\kappa_g$ of $K^\times$ such that on $\kappa_g|_{g^{-1}Hg \cap K^\times} = \psi^{{\#}^g}$. Then the character $\kappa_g$ occurs with multiplicity one in the restriction of $\pi_\psi$ to $K^\times$ and $\kappa_g|_{F^\times} = \psi|_{F^\times}$. Let $\ell \in \{0,1\}$ be such that $\kappa_g(\varpi) = (-1)^\ell \psi(\varpi)$. The vector
\[v_{\kappa_g} = \sum\limits_{\underline{\alpha} \in S^{\prime \prime} } \kappa_g(1 + \sum\limits_{r} \alpha_r \varpi^r) \left[ f_{\underline{\alpha}, i}^\prime + (-1)^\ell \psi(\zeta^{(1-i)(q+1)})  f_{\underline{\alpha}, q+1-i}^\prime \right]\] 
lies in the $\kappa_g$-isotypic component of the representation $\pi_\psi$.
\end{proposition}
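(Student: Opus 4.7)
The proof strategy parallels that of Proposition \ref{prop1}: first establish multiplicity one via Mackey's formula, and then verify directly that the proposed vector is $\kappa_g$-equivariant by computing $\pi_\psi(z) v_{\kappa_g}$ for a general $z \in K^\times$. For the multiplicity, apply the double coset decomposition
\[
\pi_\psi|_{K^\times} \;\cong\; \bigoplus_{g' \in H\backslash D^\times /K^\times} \mathrm{Ind}_{g'^{-1}Hg'\cap K^\times}^{K^\times}\!\left(\psi^{\#g'}\right).
\]
The hypothesis $\kappa_g|_{g^{-1}Hg \cap K^\times} = \psi^{\#g}$, combined with Frobenius reciprocity and the abelianness of $K^\times$, forces $\kappa_g$ to appear exactly once in the $g$-summand; since $F^\times$ lies in every $g'^{-1}Hg' \cap K^\times$, one also reads off $\kappa_g|_{F^\times} = \psi|_{F^\times}$. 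A routine check on $F^\times(1+\mathcal{P}_K)$ rules out contributions from other double cosets. The condition $\kappa_g(\varpi) = (-1)^\ell \psi(\varpi)$ singles out which of the two extensions of the common restriction to $F^\times(1+\mathcal{P}_K)$ is being realized, and is exactly what introduces the sign $(-1)^\ell$ into the formula for $v_{\kappa_g}$.

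For the equivariance, fix $x = h \cdot d'_{\underline{\delta}, i_0}$ and write a general $z \in K^\times$ in the form $z = a \cdot \varpi^s \cdot (1 + \sum_{r \geq 1} \beta_r \varpi^r)$ with $a \in F^\times$, $s \in \{0,1\}$, and $\beta_r \in \mathbb{F}$. Using the commutation $\varpi \zeta^{i_0} \varpi^{-1} = \zeta^{i_0 q}$ together with $\zeta^{i_0(q+1)} \in F^\times$, the effect of multiplying $d'_{\underline{\delta}, i_0}$ on the right by $z$ is to leave $i_0$ fixed when $s = 0$ and to send $i_0 \mapsto q+1-i_0$ when $s = 1$, while the $\underline{\delta}$-coordinates evolve under a recurrence of the same shape as in Proposition \ref{prop1}. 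The second summand $(-1)^\ell \psi(\zeta^{(1-i)(q+1)}) f'_{\underline{\alpha}, q+1-i}$ in $v_{\kappa_g}$ is precisely what is needed to absorb the index swap, together with the sign $\kappa_g(\varpi)/\psi(\varpi) = (-1)^\ell$ and the adjustment $\psi(\zeta^{(1-i)(q+1)})$ coming from moving the $\zeta^{i(1-q)}$-twist past $\varpi$ in the definition of $d'_{\underline{\alpha}, i}$. The fact that the recurrence preserves $S^{\prime \prime}$ (odd-indexed entries stay the only nonzero ones) plays here the role that preservation of $S^\prime$ plays in Proposition \ref{prop1}, and reflects the structural difference that $\varpi \in K$ but $\varpi \notin M$.

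The principal obstacle is bookkeeping: one must carefully track how the coordinates $\underline{\delta} \in S^{\prime \prime}$ transform under right multiplication by $z$, keep the twist $\zeta^{i(1-q)}$ aligned against the swap $i \leftrightarrow q+1-i$, and check that the accumulated scalar on the right-hand side collapses exactly to $\kappa_g(z)$ applied to $v_{\kappa_g}(x)$. Granted this mechanical (but lengthy) verification, one obtains $\pi_\psi(z) v_{\kappa_g} = \kappa_g(z) v_{\kappa_g}$ for all $z \in K^\times$, which together with the multiplicity one statement completes the proof.
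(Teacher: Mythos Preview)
Your proposal is correct and follows essentially the same route as the paper's proof: Mackey theory for multiplicity one, then a direct verification of $\kappa_g$-equivariance by writing $z \in K^\times$ as $a\cdot\varpi^{j_0}\cdot(1+\sum_{r \text{ odd}}\beta_r\varpi^r)$, treating the $F^\times$-part, the uniformizer $\varpi$ (which effects the swap $i\leftrightarrow q+1-i$ and produces the $(-1)^\ell\psi(\zeta^{(1-i)(q+1)})$ factor), and the principal units separately. The paper carries out the ``mechanical but lengthy'' bookkeeping you allude to by explicitly decomposing the relevant product as $b_0\cdot h_0\cdot d'_{\underline{\delta}',i}$ with $b_0\in F^\times$ and $h_0$ such that $g^{-1}h_0g\in g^{-1}Hg\cap K^\times$, then invoking the hypothesis $\kappa_g|_{g^{-1}Hg\cap K^\times}=\psi^{\#g}$; your outline anticipates exactly this structure.
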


\begin{proof}
The first assertion is straightforward Mackey theory. We now show that $v_{\kappa_g}$ is a $\kappa_g$-invariant vector. To this end, observe that $v_{\kappa_g}(x) = 0$ unless fix $x = h \cdot d_{\underline{\delta}, i}^\prime$ or 
$x = h \cdot d_{\underline{\delta}, {q+1-i}}^\prime$ for $h \in H$, $\underline{\delta}=(\delta_1,\dots,\delta_{\frac{m-1}{2}}) \in \mathcal S^{\prime\prime}$. Clearly,
\[
v_{\kappa_g}(x) = 
\begin{cases}
\psi^\#(h) \cdot \kappa_g(1 + \sum\limits_{r} \delta_r \varpi^r)  &\text{if $x = h \cdot d_{\underline{\delta}, i}^\prime $,} \\
(-1)^\ell \psi(\zeta^{(1-i)(q+1)}) \psi^\#(h) \cdot \kappa_g(1 + \sum\limits_{r} \delta_r \varpi^r)  &\text{if $x = h \cdot d_{\underline{\delta}, q+1-i}^\prime $,} \\
0 &\text{otherwise.}
\end{cases}
\]
We write $z \in K^\times$ as 
\[z= a \cdot \varpi^{j_0} \cdot (1 + \sum\limits_{\substack{j \geq 1  \\    j \text{ is odd} }}\beta_j \cdot  \varpi^j )\]
for some $a \in F^\times$, $j_0 \in \{0,1\}$, and $\beta_j \in \mathbb F$ for all $r$. Clearly, when $a \in F^\times \subset K^\times $, we have 
\[\pi_\psi(a) v_{\kappa_g}(x)=v_{\kappa_g}(xa)=\psi(a) \cdot v_{\kappa_g}(x) = \kappa_g(a) v_{\kappa_g}(x).\]
Now we check the action of the uniformizer $\varpi \in K^\times$ and elements of the form 
\[z^\prime = 1 + \sum\limits_{\substack{j \geq 1  \\    j \text{ is odd} }}\beta_j \cdot  \varpi^j \]
with $\beta_r \in \mathbb F$ for all $r$. Without loss of generality we may take $x =  h \cdot d_{\underline{\delta}, i}^\prime$, as the proof in the case $x =  h \cdot d_{\underline{\delta}, q+1-i}^\prime$ being similar. Then, 
\begin{align*}
\pi_\psi(\varpi) v_{\kappa_g}(x) &= v_{\kappa_g}(h \cdot d_{\underline{\delta}, i}^\prime  \cdot \varpi) \\
&= v_{\kappa_g}(h \cdot \varpi \cdot \zeta^{(i-1)(q+1)} \cdot d_{\underline{\delta}, q+1-i}^\prime) \\
&= \psi^\#(h) \cdot \psi(\varpi) \cdot \psi(\zeta^{(i-1)(q+1)}) \cdot \kappa_g(1 + \sum\limits_{r} \delta_r \varpi^r)  (-1)^\ell \psi(\zeta^{(1-i)(q+1)}) \\
&= (-1)^\ell\psi(\varpi)  \cdot  \psi^\#(h) \kappa_g(1 + \sum\limits_{r} \delta_r \varpi^r)  \\
&= \kappa_g(\varpi) \cdot v_{\kappa_g}(x).
\end{align*}
Now we check the action of $z^\prime$. We have 
\begin{align*}
\pi_\psi(z^\prime) v_{\kappa_g}(x) &= v_{\kappa_g}(h \cdot d_{\underline{\delta}, i}^\prime \cdot (1 + \sum\limits_{\substack{j \geq 1  \\    j \text{ is odd} }}\beta_j \cdot \varpi^j )) \\
&= v_{\kappa_g}\left( h (1 + \sum\limits_{\substack{ r = 1  \\  r \text{ is odd} }}^{\frac{m-1}{2}} \delta_r \zeta^{i(1-q)} \varpi^r)  \cdot (1 + \sum\limits_{\substack{j \geq 1  \\    j \text{ is odd} }}\beta_j  \zeta^{i(1-q)} \varpi^j ) \zeta^i \right).
\end{align*}
Let $m^*$ be the largest odd integer in $\{1,2,...,\frac{m-1}{2}\}$. Then the element 
\[\textbf{y} = (1 + \sum\limits_{\substack{r = 1  \\    r \text{ is odd} }}^{m^*} \delta_r \zeta^{i(1-q)} \varpi^r) (1 + \sum\limits_{\substack{j \geq 1  \\    j \text{ is odd} }}\beta_j  \zeta^{i(1-q)} \varpi^j ) \zeta^i\]
can be written as 
\[\textbf{y } = b_0 \cdot h_0 \cdot d^\prime_{\underline{\delta}^{\prime},i}\] 
where,
\[b_0 = 1 + \sum\limits_{\substack{r = 2 \\    r \text{ is even} }}^{m^*+1} \xi_r  \varpi^r \]
with 
\[\xi_r = \sum\limits_{\substack{t=1  \\    t \text{ is odd} }}^{r-1} \delta_{r-t}\beta_t \]
for each even $r \leq m^*+1$, 
and
\[\underline{\delta}^{\prime} = (\delta_1^{\prime},...,\delta_{\frac{m-1}{2}}^{\prime}) \in S^{\prime\prime}\]
with 
\[\delta_r^{\prime} = \delta_r + \beta_r  - \sum\limits_{\substack{t =1 \\    t \text{ is odd} }}^{r-2} \delta_t^{\prime} \xi_{r-t},\]
for each odd $r \leq m^*$, 
and
\[h_0 = 1 + \sum\limits_{\substack{j =m^*+2  \\    j \text{ is odd} }}^{m} \epsilon_j \zeta^{i(1-q)}\varpi^j + \sum\limits_{\substack{k =m^* +3 \\    k \text{ is even} }}^{m-1} \xi_k \varpi^k +  \cdots \]
such that
\[\epsilon_j = \beta_j -\sum\limits_{\substack{t=1  \\    t \text{ is odd} }}^{m^*} \delta_t^\prime \xi_{j-t}- \sum\limits_{\substack{t=m^* +2  \\    t \text{ is odd} }}^{j-2} \epsilon_t \xi_{j-t}, \]
for odd $j \geq \frac{m+1}{2}$,
and
\[\xi_k = \sum\limits_{\substack{t=1  \\    t \text{ is odd } }}^{m^*}\delta_t \beta_{k-t} -\sum\limits_{\substack{t=m^*+3  \\    t \text{ is even} }}^{k-2} \xi_t \xi_{k-t} - \sum\limits_{\substack{t=m^*+2  \\    t \text{ is odd} }}^{k-1} \delta_{k-t}^\prime \epsilon_{t},\]
for even $k \geq \frac{m+1}{2}$.
Denote 
\[\tilde{h} = g^{-1}h_0 g = 1 + \sum\limits_{\substack{j =m^*+2  \\    j \text{ is odd} }}^{m} \epsilon_j \varpi^j + \sum\limits_{\substack{k =m^* +3 \\    k \text{ is even} }}^{m-1} \xi_k \varpi^k +  \cdots.\]
Note that $\tilde{h} \in g^{-1}Hg \cap K^\times$. We have
\begin{align*}
 \pi_\psi(z^\prime) v_{\kappa_g}(x) &= v_{\kappa_g}(h \cdot b_0 \cdot h_0 \cdot d_{\underline{\delta}^\prime,i}^\prime) \\
&= \psi^\#(h) \cdot \psi(b_0) \cdot \psi^{\#^g}(\tilde{h}) \cdot \kappa_g(1 + \sum\limits_{\substack{r = 1  \\    r \text{ is odd} }}^{m^*} \delta_r^{\prime} \varpi^r) ,\\
&= \psi^\#(h) \cdot \kappa_g \left(b_0 \cdot \tilde{h} \cdot (1 + \sum\limits_{\substack{r = 1  \\    r \text{ is odd} }}^{m^*} \delta_r^{\prime} \varpi^r)\right).\\
\end{align*}
Observe that 
\begin{align*}
z^\prime \cdot (1 + \sum\limits_{\substack{r = 1  \\    r \text{ is odd} }}^{m^*} \delta_r \varpi^r) 
&= (1 + \sum\limits_{\substack{j \geq 1  \\    j \text{ is odd} }} \beta_j  \varpi^j ) \cdot (1 + \sum\limits_{\substack{r = 1  \\    r \text{ is odd} }}^{m^*} \delta_r \varpi^r) \\
&= b_0 \cdot (1 + \sum\limits_{\substack{j =m^*+2  \\    j \text{ is odd} }}^{m} \epsilon_j \varpi^j + \sum\limits_{\substack{k =m^* +3 \\    k \text{ is even} }}^{m-1} \xi_k \varpi^k +  \sum\limits_{t \geq m+1} \gamma_t \varpi^t) \cdot (1 + \sum\limits_{\substack{r = 1  \\    r \text{ is odd} }}^{m^*} \delta_r^{\prime} \varpi^r) 
\end{align*}
for some $\gamma_t \in \mathbb F$ for $t \geq m+1$. Since $ \kappa_g|_{(1 + \mathcal{P}_K^{m+1})}=1$, we have 
\begin{align*}
\kappa_g \left(b_0 \cdot \tilde{h} \cdot (1 + \sum\limits_{\substack{r = 1  \\    r \text{ is odd} }}^{m^*} \delta_r^{\prime} \varpi^r)\right) 
&=  \kappa_g \left( b_0 \cdot (1 + \sum\limits_{\substack{j =m^*+2  \\    j \text{ is odd} }}^{m} \epsilon_j \varpi^j + \sum\limits_{\substack{k =m^* +3 \\    k \text{ is even} }}^{m-1} \xi_k \varpi^k)\cdot (1 + \sum\limits_{\substack{r = 1  \\    r \text{ is odd} }}^{m^*} \delta_r^{\prime} \varpi^r)\right) \\
&= \kappa_g \left (z^\prime \cdot (1 + \sum\limits_{\substack{r = 1  \\    r \text{ is odd} }}^{m^*} \delta_r \varpi^r) \right) = \kappa_g(z^\prime)  \kappa_g(1 + \sum\limits_{r} \delta_r \varpi^r). 
\end{align*}
Therefore,
\begin{align*}
 \pi_\psi(z^\prime) v_{\kappa_g}(x) &= \kappa_g(z^\prime) \cdot \psi^\#(h) \kappa_g(1 + \sum\limits_{r} \delta_r \varpi^r) \\
 &= \kappa_g(z^\prime) \cdot v_{\kappa_g}(x).
\end{align*}
Hence, $\pi_\psi(z) \cdot v_{\kappa_g}= \kappa_g(z) \cdot v_{\kappa_g}$ for all $z \in K^\times$.
\end{proof}

\begin{remark}\label{rmk2}
For each $g=\zeta^i$, where $1 \leq i \leq \frac{q-1}{2}$, the number of characters $\kappa_g: K^\times \rightarrow \mathbb C$ satisfying $\kappa_g |_{g^{-1}Hg \cap K^\times} = \psi^{{\#}^g}$ is 
\[ \left| \frac{K^\times}{F^\times (1 + \mathcal P_K^{\frac{m+1}{2}})} \right| = 2q^{\frac{m-1}{2} [\frac{m-1}{4}]}.\] 
Thus, in Proposition \ref{prop2}, we have exhibited equivariant vectors in $\pi_\psi$ for 
\[\frac{q-1}{2} \times 2q^{\frac{m-1}{2} - \left[\frac{m-1}{4} \right]} = (q-1)q^{\frac{m-1}{2} - \left[\frac{m-1}{4} \right]}\]
many characters $\kappa_g$.
\end{remark}

If $q \equiv 3 \mod 4$ and $\psi|_{F^\times} = 1$ then the trivial character $\textbf{1}_{K^\times}$ of $K^\times$ occurs in the restriction $\pi_\psi$ to $K^\times$. We have the following result as a corollary to the above proposition. 

\begin{corollary}\label{cor2}
Suppose $q \equiv 3 \mod 4$ and write $q+1 = 4t_0$. Assume that $\pi_\psi$ has trivial central character. Then
\[v_K = \sum\limits_{\underline{\alpha} \in S^{\prime \prime} } \left[ f_{\underline{\alpha}, t_0} + \psi(\varpi) f_{\underline{\alpha}, 3t_0} \right]\]
is a $K^\times$-fixed vector in $\pi_\psi$.
\end{corollary}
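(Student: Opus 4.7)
The plan is to deduce this corollary from Proposition \ref{prop2} by specializing to the trivial character $\kappa_g = \textbf{1}_{K^\times}$ with an appropriately chosen $g$. Since $q \equiv 3 \bmod 4$, the integer $t_0 = (q+1)/4$ satisfies $1 \leq t_0 \leq (q-1)/2$, so $g = \zeta^{t_0}$ lies in the admissible range of the proposition. The trivial central character hypothesis on $\pi_\psi$ translates to $\psi|_{F^\times} = 1$, which in particular forces $\psi(\varpi) \in \{\pm 1\}$.

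The first step is to verify that $\textbf{1}_{K^\times}$ satisfies the compatibility condition $\kappa_g|_{g^{-1}Hg \cap K^\times} = \psi^{\#^g}|_{g^{-1}Hg \cap K^\times}$ required by Proposition \ref{prop2}. This amounts to showing that $\psi^{\#^g}$ restricts trivially to $g^{-1}Hg \cap K^\times$. I would establish this by computing the intersection explicitly from the description $H = K^\times(1 + \mathcal{P}_D^{(m+1)/2})$, observing that the conjugation by $g = \zeta^{t_0}$ combined with $\psi|_{F^\times} = 1$ makes $\psi^{\#^g}$ trivial on the relevant higher unit group, and that the $F^\times$-part is killed by triviality of the central character. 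This simultaneously ensures that $\textbf{1}_{K^\times}$ actually occurs in $\pi_\psi|_{K^\times}$, justifying the specialization.

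Once compatibility is in hand, the rest is direct substitution into the formula of Proposition \ref{prop2}. The parameter $\ell \in \{0,1\}$ defined by $\kappa_g(\varpi) = (-1)^\ell \psi(\varpi)$ forces $(-1)^\ell = \psi(\varpi)$, since the left side equals $1$. Because $\zeta^{q+1}$ is a $(q-1)$-st root of unity lying in $F^\times$ and $\psi|_{F^\times} = 1$, we have $\psi(\zeta^{(1-t_0)(q+1)}) = 1$, so the scalar $(-1)^\ell \psi(\zeta^{(1-i)(q+1)})$ in the formula collapses to $\psi(\varpi)$. With $i = t_0$ and $q+1-i = 3t_0$, and with each coefficient $\kappa_g(1 + \sum_r \alpha_r \varpi^r)$ equal to $1$, Proposition \ref{prop2} yields the claimed expression for $v_K$. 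The only real subtlety lies in the first step — the arithmetic of why the specific choice $t_0 = (q+1)/4$ (which is integral precisely because $q \equiv 3 \bmod 4$) matches the compatibility class of the trivial character of $K^\times$; everything else is routine specialization.
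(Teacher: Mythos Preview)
Your overall strategy matches the paper's --- specialize Proposition \ref{prop2} at $g=\zeta^{t_0}$ with $\kappa_g=\mathbf{1}_{K^\times}$ --- but there is a genuine gap in what you call ``direct substitution.'' Proposition \ref{prop2} outputs the vector in the \emph{primed} basis $f'_{\underline{\alpha},i}$, built from the coset representatives
\[
d'_{\underline{\alpha},i}=\Bigl(1+\sum_r \alpha_r\,\zeta^{i(1-q)}\varpi^r\Bigr)\zeta^i,
\]
whereas the corollary is stated in the \emph{unprimed} basis $f_{\underline{\alpha},i}$, built from $d_{\underline{\alpha},i}=(1+\sum_r\alpha_r\,\zeta^{(q+1)/2}\varpi^r)\zeta^i$. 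Substitution therefore gives $\sum_{\underline{\alpha}\in S''}\bigl[f'_{\underline{\alpha},t_0}+\psi(\varpi)f'_{\underline{\alpha},3t_0}\bigr]$, which is not yet the claimed $v_K$.

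The paper supplies exactly this missing conversion: from $4t_0=q+1$ one has $\zeta^{t_0(1-q)}=\eta\,\zeta^{(q+1)/2}$ with $\eta=-\zeta^{(t_0-1)(q+1)}\in\mathbb{F}$ (and $\zeta^{-t_0(1-q)}=-\eta\,\zeta^{(q+1)/2}$), so that $d'_{\underline{\alpha},t_0}=d_{\underline{\alpha\eta},t_0}$ and $d'_{\underline{\alpha},q+1-t_0}=d_{-\underline{\alpha\eta},3t_0}$. Hence $f'_{\underline{\alpha},t_0}=f_{\underline{\alpha\eta},t_0}$ and $f'_{\underline{\alpha},3t_0}=f_{-\underline{\alpha\eta},3t_0}$, and re-indexing the sum over $S''$ (the map $\underline{\alpha}\mapsto\pm\underline{\alpha\eta}$ being a bijection) yields the stated $v_K$. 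This identity --- that $\zeta^{t_0(1-q)}$ lands in $\mathbb{F}\cdot\zeta^{(q+1)/2}$ --- is what actually singles out $i=t_0$, and it is where $q\equiv 3\pmod 4$ enters essentially. So the subtlety is not only (or even primarily) in the compatibility check, which the paper dispatches in a single line; the bulk of the paper's proof is the basis conversion you have omitted.
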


\begin{proof}
If $\pi_\psi$ has trivial central character then $\psi(\varpi) = \pm 1$. Since $q+1=4t_0$, we have 
\[\zeta^{t_0 (1-q)} = \eta \cdot \zeta^{\frac{q+1}{2}},\]
where $\eta=-\zeta^{(t_0-1)(q+1)} \in \mathbb F$. Note that $\zeta^{-t_0 (1-q)} = -\eta \cdot  \zeta^{\frac{q+1}{2}}$. Therefore, for each $\underline{\alpha}=(\alpha_1, \alpha_2, \dots, \alpha_{\frac{m-1}{2}}) \in \mathcal{S}^{\prime \prime}$, we have
\[f_{\underline{\alpha}, t_0}^\prime = f_{\underline{\alpha \eta}, t_0},\]
where $\underline{\alpha \eta }=(\alpha_1 \eta, \alpha_2 \eta, \dots, \alpha_{\frac{m-1}{2}}\eta) \in \mathcal{S}^{\prime \prime}$, and
\[f_{\underline{\alpha}, q+1-t_0}^\prime = f_{-\underline{\alpha \eta}, 3t_0},\]
where $-\underline{\alpha \eta} = (-\alpha_1 \eta, -\alpha_2 \eta,\dots, -\alpha_{\frac{m-1}{2}}\eta) \in \mathcal{S}^{\prime \prime}$.
When $g = \zeta^{t_0}$ and $\psi|_{F^\times} = 1$, we have $\kappa_g|_{g^{-1}Hg \cap K^\times}= \psi^{\#^g}=1$ because $H \cap g K^\times g^{-1} \subset \mathrm{Ker}(\psi^\#)$. Now with $g=\zeta^{t_0}$ and $\kappa_g=1$ in Proposition \ref{prop2},  we get 
\begin{align*}
v_K &= \sum\limits_{\underline{\alpha} \in S^{\prime \prime} } \kappa_{g}(1 + \sum\limits_{r} \alpha_r \varpi^r) \left[ f_{\underline{\alpha}, t_0}^\prime + (-1)^\ell \psi(\zeta^{(1-t_0)(q+1)})  f_{\underline{\alpha}, q+1-t_0}^\prime \right] \\
&= \sum\limits_{\underline{\alpha} \in S^{\prime \prime} } \left[ f_{\underline{\alpha}, t_0}^\prime + \psi(\varpi)   f_{\underline{\alpha}, q+1-t_0}^\prime \right] \\
&= \sum\limits_{\underline{\alpha} \in S^{\prime \prime} } \left[ f_{\underline{\alpha \eta}, t_0} + \psi(\varpi)  f_{-\underline{\alpha \eta}, 3t_0} \right] \\
& =\sum\limits_{\underline{\alpha} \in S^{\prime \prime} }  \left[f_{\underline{\alpha}, t_0} + \psi(\varpi)   f_{\underline{\alpha}, 3t_0} \right].
\end{align*}
\end{proof}

\begin{remark}\label{extra-k}
We can state Corollary \ref{cor2} slightly more generally as follows. Let $s$ be an integer such that $-\frac{q-3}{4} \leq s \leq \frac{q+1}{4}$ and let $K_s=F(\zeta^{2s} \varpi)$. Then, the vector 
\[v_{K_s}= \sum\limits_{\underline{\alpha} \in S^{\prime \prime}} [f_{\underline{\alpha}, t_0-s} + \psi(\varpi) f_{\underline{\alpha}, 3t_0-s}]\]
is a $K_s^\times$-fixed vector in $\pi_\psi$.
\end{remark}

\subsubsection{Restriction to $L^\times$}

Let $L$ be the ramified quadratic field extension over $F$ which is not conjugate to $K = F(\varpi)$. We take $L = F(\zeta \varpi)$. 

For each integer $i$ with $0 \leq i \leq q$ and $\underline{\alpha}=(\alpha_1, \alpha_2, \dots \alpha_{\frac{m-1}{2}}) \in \mathcal{S}$, we define
\[d_{\underline{\alpha}, i}^{\prime\prime} = (1 + \sum\limits_{r=1}^{\frac{m-1}{2}} \alpha_r \zeta^{1+i(1-q)} \varpi^r) \cdot \zeta^i.\] 
For each pair $(\underline{\alpha},i)$ with $\underline{\alpha} \in \mathcal{S}$ and $0 \leq i \leq q$, we define the corresponding functions on $D^\times$ by 
\[f^{\prime\prime}_{\underline{\alpha}, i}(h \cdot d^{\prime\prime}_{\underline{\beta}, j}) = \begin{cases}
\psi^\#(h) &\text{if $\underline{\beta} = \underline{\alpha}$ and $j = i$,} \\
0 &\text{otherwise.}
\end{cases} \]
Note that these functions lie in the induced space of $\pi_\psi = \mathrm{Ind}^G_{H} \psi^\#$.

\begin{proposition}\label{prop3}
For $1 \leq i \leq \frac{q+1}{2}$, pick the element $g = \zeta^i \in D^\times$ and consider a smooth character $\lambda_g$ of $L^\times$ such that on $\lambda_g|_{g^{-1}Hg \cap L^\times} = \psi^{{\#}^g}$. Then the character $\lambda_g$ occurs with multiplicity one in the restriction of $\pi_\psi$ to $L^\times$ and $\lambda_g|_{F^\times} = \psi|_{F^\times}$. Let $\ell \in \{0,1\}$ be such that  and $\lambda_g(\varpi_L) = \lambda_g(\zeta \varpi) = (-1)^\ell \sqrt{\psi(\zeta^{q+1})} \psi(\varpi)$. The vector 
\[v_{\lambda_g} = \sum\limits_{\underline{\alpha} \in S^{\prime \prime} } \lambda_g(1 + \sum\limits_{r} \alpha_r \zeta \varpi^r) \left[ f_{\underline{\alpha}, i}^{\prime\prime} + (-1)^\ell  
\psi(\zeta^{-i(q+1)})\sqrt{\psi(\zeta^{q+1})}
f_{\underline{\alpha}, q-i}^{\prime \prime} \right]\]
lies in the $\lambda_g$-isotypic component of the representation $\pi_\psi$.
\end{proposition}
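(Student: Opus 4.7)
The plan is to mimic the argument of Proposition~\ref{prop2} very closely, adapting the coset calculus to the fact that $L = F(\zeta\varpi)$ has uniformizer $\varpi_L = \zeta\varpi$. The first assertion, including $\lambda_g|_{F^\times} = \psi|_{F^\times}$, is standard Mackey theory for $\pi_\psi = \mathrm{Ind}_H^{D^\times}\psi^\#$. The relation $\varpi\zeta\varpi^{-1} = \zeta^q$ gives $\varpi_L^2 = (\zeta\varpi)^2 = \zeta^{q+1}\varpi^2$, so any $\lambda_g$ extending $\psi|_{F^\times}$ as prescribed must satisfy $\lambda_g(\varpi_L)^2 = \psi(\zeta^{q+1})\psi(\varpi)^2$, which explains the $\pm\sqrt{\psi(\zeta^{q+1})}\,\psi(\varpi)$ normalization and the ambiguity $(-1)^\ell$. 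I would then observe that $v_{\lambda_g}$ is supported on the two families of cosets $H\cdot d''_{\underline{\delta},i}$ and $H\cdot d''_{\underline{\delta},q-i}$ with $\underline{\delta}\in S''$, and write any $z \in L^\times$ as $z = a\cdot\varpi_L^{j_0}\cdot z'$ with $a \in F^\times$, $j_0 \in \{0,1\}$, and $z' = 1 + \sum_{j\geq 1,\, j\text{ odd}}\beta_j(\zeta\varpi)^j$, $\beta_j \in \mathbb F$. The action of $a \in F^\times$ is immediate from $\pi_\psi(a) = \psi(a) = \lambda_g(a)$, so the verification reduces to the two cases $z = \varpi_L$ and $z = z'$.

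For $\varpi_L$, I would use $\zeta^k\varpi = \varpi\zeta^{kq}$ and the congruence $(i+1)q \equiv q-i \pmod{q+1}$ to rewrite $d''_{\underline{\delta},i}\cdot\zeta\varpi$ as $b\cdot h_0\cdot d''_{\underline{\delta},q-i}$ with $b \in F^\times$ and $h_0 \in H$. The powers of $\zeta^{q+1}$ generated by the commutation contribute a factor $\psi(\zeta^{i(q+1)})$ through $\psi^\#(h_0)\psi(b)$, which combined with the coefficient $(-1)^\ell\psi(\zeta^{-i(q+1)})\sqrt{\psi(\zeta^{q+1})}$ in the definition of $v_{\lambda_g}$ reproduces $\lambda_g(\varpi_L)$. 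The parallel computation on the support family $x = h\cdot d''_{\underline{\delta},q-i}$ is consistent because $\varpi_L^2 \in F^\times$ and $\pi_\psi(\varpi_L^2)$ acts by $\psi(\varpi_L^2) = \lambda_g(\varpi_L)^2$.

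For $z'$, one runs the same re-decomposition strategy as in Proposition~\ref{prop2}: expand
\[d''_{\underline{\delta},i}\cdot z' = b_0 \cdot h_0 \cdot d''_{\underline{\delta}',i}\]
with $b_0 \in 1 + \mathcal P_F$ coming from the even-degree cross-terms, $h_0 = g\tilde h g^{-1}$ with $\tilde h \in g^{-1}Hg \cap L^\times$, and $\underline{\delta}'\in S''$ encoding the updated odd-degree coordinates. Using $\lambda_g|_{g^{-1}Hg \cap L^\times} = \psi^{\#^g}$ and the triviality of $\lambda_g$ on $1 + \mathcal P_L^{m+1}$, the product $b_0\cdot\tilde h\cdot\bigl(1+\sum_r\delta_r'(\zeta\varpi)^r\bigr)$ collapses inside $\lambda_g$ to $\lambda_g(z')\cdot\lambda_g\bigl(1+\sum_r\delta_r(\zeta\varpi)^r\bigr)$, giving $\pi_\psi(z')v_{\lambda_g} = \lambda_g(z')v_{\lambda_g}$. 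The main obstacle is the bookkeeping of the $\zeta^{q+1}$-powers produced by each commutation $\zeta\leftrightarrow\varpi$ inside the coset representatives $d''_{\underline{\delta},i}$; it is precisely these powers that force the scalars $\sqrt{\psi(\zeta^{q+1})}$ and $\psi(\zeta^{-i(q+1)})$ into the formula for $v_{\lambda_g}$, and once the signs are tracked correctly in the $\varpi_L$ step, the rest of the verification is a direct transcription of the argument for Proposition~\ref{prop2}.
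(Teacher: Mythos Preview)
Your proposal is correct and follows essentially the same approach as the paper: the paper's own proof simply writes $z = a\cdot(\zeta\varpi)^{j_0}\cdot(1+\sum_{j\text{ odd}}\beta_j\,\zeta\varpi^j)$ and then says ``the proof proceeds exactly along the lines of the proof of Proposition~\ref{prop2},'' which is precisely the strategy you outline in more detail. The only cosmetic discrepancy is that you parametrize the unipotent part via $(\zeta\varpi)^j$ rather than $\zeta\varpi^j$, but since $(\zeta\varpi)^2 = \zeta^{q+1}\varpi^2 \in F^\times$ these differ by an $\mathbb F^\times$-factor absorbed into $\beta_j$, and the argument is unaffected.
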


\begin{proof}
The first assertion is straightforward Mackey theory. To show that $v_{\lambda_g}$ is a $\lambda_g$-invariant vector, we write each element of $L^\times$ in the form 
\[z= a \cdot (\zeta \varpi)^{j_0} \cdot (1 + \sum\limits_{\substack{j \geq 1  \\    j \text{ is odd} }}\beta_j \cdot  \zeta \varpi^j )\]
for some $a \in F^\times$, $j_0 \in \{0,1\}$, and $\beta_j \in \mathbb F$ for all $j$. Then the proof proceeds exactly along the lines of the proof of Proposition \ref{prop2}. 	
\end{proof}	

\begin{remark}\label{rmk3}
For each $g=\zeta^i$, where $1 \leq i \leq \frac{q+1}{2}$, the number of characters $\lambda_g: L^\times \rightarrow \mathbb C$ satisfying $\lambda_g|_{g^{-1}Hg \cap L^\times} = \psi^{{\#}^g}$ is 
\[\left| \frac{L^\times}{F^\times (1 + \mathcal P_L^{\frac{m+1}{2}})} \right| = 2q^{\frac{m-1}{2} - \left[\frac{m-1}{4} \right]}.\]
Thus, in Proposition \ref{prop3}, we have exhibited equivariant vectors for 
\[\frac{q+1}{2} \times 2q^{\frac{m-1}{2} - \left[\frac{m-1}{4} \right]} = (q+1)q^{\frac{m-1}{2} - \left[\frac{m-1}{4} \right]}\] many characters $\lambda_g$. 
\end{remark}

If $q \equiv 1 \mod 4$ and $\psi|_{F^\times} = 1$ then the trivial character $\textbf{1}_{L^\times}$ of $L^\times$ occurs in the restriction $\pi_\psi$ to $L^\times$.  We have the following result as a corollary to the above proposition.

\begin{corollary}\label{cor3}
Suppose $q \equiv 1 \mod 4$ and write $q-1 = 4r_0$. Assume that $\pi_\psi$ has trivial central character. Then
\[v_L = \sum\limits_{\underline{\alpha} \in S^{\prime \prime} } \left[ f_{\underline{\alpha}, r_0} + \psi(\varpi) f_{\underline{\alpha}, 3r_0+1} \right]\]
is an $L^\times$-fixed vector in $\pi_\psi$.
\end{corollary}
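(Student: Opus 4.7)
The plan is to apply Proposition \ref{prop3} with $g = \zeta^{r_0}$ and $\lambda_g$ the trivial character $\mathbf{1}_{L^\times}$, mirroring the derivation of Corollary \ref{cor2}. First I would verify admissibility of this choice: the hypothesis of trivial central character gives $\psi|_{F^\times} = 1$, and by the conductor estimates recorded in Section \ref{prelim} the intersection $H \cap g L^\times g^{-1}$ is contained in $\ker \psi^\#$, so $\psi^{\#^g}|_{g^{-1}Hg \cap L^\times}$ is trivial and $\mathbf{1}_{L^\times}$ indeed occurs in $\pi_\psi|_{L^\times}$ with multiplicity one.

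Next I would pin down the scalar that appears in the formula of Proposition \ref{prop3}. From $\lambda_g(\zeta\varpi) = (-1)^\ell \sqrt{\psi(\zeta^{q+1})}\,\psi(\varpi) = 1$ and the fact that $\psi(\varpi) = \pm 1$ under the central character assumption, I get $(-1)^\ell \sqrt{\psi(\zeta^{q+1})} = \psi(\varpi)$. Since $\zeta^{q+1} \in F^\times$ and $\psi|_{F^\times} = 1$, also $\psi(\zeta^{-r_0(q+1)}) = 1$; hence the coefficient of $f^{\prime\prime}_{\underline{\alpha},\,q-r_0}$ collapses to $\psi(\varpi)$, and $q - r_0 = 3r_0+1$ matches the index appearing in the statement.

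The remaining step is to convert each $f^{\prime\prime}_{\underline{\alpha},i}$ into an $f_{\underline{\alpha},i}$, for $i \in \{r_0,\,3r_0+1\}$, by exhibiting $\eta_i \in \mathbb{F}^\times$ with $\zeta^{1+i(1-q)} = \eta_i\,\zeta^{(q+1)/2}$. A short modular arithmetic check modulo $q+1$ (using $q \equiv -1$ and $(q+1)/2 = 2r_0+1$) shows $1 + i(1-q) \equiv 2r_0+1 \pmod{q+1}$ for both values of $i$; this is the analogue of the identity $\zeta^{t_0(1-q)} = \eta\,\zeta^{(q+1)/2}$ invoked in Corollary \ref{cor2}. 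Consequently $d^{\prime\prime}_{\underline{\alpha},i} = d_{\underline{\alpha}\eta_i,\,i}$ as elements of $D^\times$, so $f^{\prime\prime}_{\underline{\alpha},i} = f_{\underline{\alpha}\eta_i,\,i}$. Reindexing $\underline{\alpha} \mapsto \underline{\alpha}/\eta_i$, which is a bijection of $S^{\prime\prime}$, then yields the claimed expression for $v_L$.

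The main obstacle I anticipate is careful sign bookkeeping through the ambiguous square root $\sqrt{\psi(\zeta^{q+1})}$: one needs to confirm that the coefficient really simplifies to $\psi(\varpi)$ rather than $-\psi(\varpi)$, and that the two reindexing bijections (one for each value of $i$) leave no residual scalar. The parallel with Corollary \ref{cor2}, where a spurious minus sign in $-\underline{\alpha\eta}$ was absorbed by the bijection $\underline{\alpha}\mapsto -\underline{\alpha}$ on $S^{\prime\prime}$, gives reassurance that the bookkeeping closes cleanly here as well.
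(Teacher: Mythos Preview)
Your proposal is correct and follows essentially the same route as the paper's proof: specialize Proposition~\ref{prop3} at $g=\zeta^{r_0}$ with $\lambda_g=\mathbf{1}_{L^\times}$, simplify the scalar to $\psi(\varpi)$, and then pass from the $f''_{\underline{\alpha},i}$ to the $f_{\underline{\alpha},i}$ via the identity $\zeta^{1+r_0(1-q)}=\eta'\zeta^{(q+1)/2}$ with $\eta'=\zeta^{-r_0(q+1)}\in\mathbb{F}$, absorbing the sign flip at the second index through the bijection $\underline{\alpha}\mapsto -\underline{\alpha}$ on $S^{\prime\prime}$. The paper records exactly these two identities (for $i=r_0$ and $i=q-r_0=3r_0+1$) and then defers the remainder to Proposition~\ref{prop3}, so your outline matches it step for step.
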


\begin{proof}
If $q-1 = 4r_0$ then 
\[\zeta^{1+ r_0 (1-q)} = \eta^\prime \cdot \zeta^{\frac{q+1}{2}},\]
where $\eta^\prime = \zeta^{-r_0(q+1)}$. Note that $\zeta^{q+ r_0 (q-1)} = -\eta^\prime \cdot  \zeta^{\frac{q+1}{2}}$. Therefore, for each $\underline{\alpha}=(\alpha_1, \alpha_2, \dots, \alpha_{\frac{m-1}{2}}) \in \mathcal{S}^{\prime \prime}$, we have
\[f_{\underline{\alpha}, r_0}^{\prime\prime} = f_{\underline{\alpha \eta}^\prime, r_0},\]
where $\underline{\alpha \eta}^\prime = (\alpha_1 \eta^\prime, \alpha_2 \eta^\prime, \dots, \alpha_{\frac{m-1}{2}}\eta^\prime) \in \mathcal{S}^{\prime \prime}$, and 
\[f_{\underline{\alpha}, q-r_0}^{\prime \prime} = f_{-\underline{\alpha \eta}^\prime, 3r_0+1},\]
where $-\underline{\alpha \eta}^\prime = (-\alpha_1 \eta^\prime, -\alpha_2 \eta^\prime, \dots, -\alpha_{\frac{m-1}{2}}\eta^\prime) \in \mathcal{S}^{\prime \prime}$. The rest of the proof follows from Proposition \ref{prop3}
\end{proof}

\begin{remark}\label{extra-l}
We can state Corollary \ref{cor3} slightly more generally as follows. Let $s$ be an such that $-\frac{q-1}{4} \leq s \leq \frac{q+1}{4}$ and let $L_s=F(\zeta^{2s+1} \varpi)$. Then, the vector 
\[v_{L_s}= \sum\limits_{\underline{\alpha} \in S^{\prime \prime}} [f_{\underline{\alpha}, r_0-s} + \psi(\varpi) f_{\underline{\alpha}, 3r_0-s+1}]\]
is a $L_s^\times$-fixed vector $\pi_\psi$.
\end{remark}

\section{Construction of invariant vectors: Unramified Case}\label{unramified}

Recall that we are working with $M = F(\zeta), K = F(\varpi)$, and $L = F(\zeta\varpi)$.

Let
\[\mathcal{R} =\{ (\alpha_1, 0, \alpha_3, 0, \dots, \alpha_{m-1}) \mid \alpha_r \in \mathbb D, 1 \leq r \leq m-1\}.\]
Let $\mathcal{R}^\prime$ denote the subset of $\mathcal{R}$ consisting of those tuples $\underline{\alpha}=(\alpha_1, 0, \alpha_3, 0, \dots, \alpha_{m-1})$ such that $\alpha_r \in \mathbb F$ for all $r$. 

For each integer $i$ with $i \in \{0,1\}$ and $\underline{\alpha}=(\alpha_1, 0, \alpha_3, 0, \dots, \alpha_{m-1}) \in \mathcal{R}$, we define
\[\Delta_{i,\underline{\alpha}} = \varpi^i \cdot (1 + \sum\limits_{r=1}^{m-1} \alpha_r  \varpi^r).\]
Then
\[\{\Delta_{i,\underline{\alpha}} \mid \underline{\alpha} \in \mathcal{R}, i= 0, 1\}\]
forms a set of right coset representatives for $D^\times/H$, where $H = M^\times(1+\mathcal P_D^m)$. 

We have
\[\pi_\psi = {\rm Ind}_H^{D^\times} \theta_\psi^\#,\]
where $\theta_\psi^\#$ is a one dimensional character when $m$ is even and is a $q$-dimensional representation when $m$ is odd. For convenience, let $W$ denote the space of the representation $\theta_\psi^\#$ (in either case). 

When $m$ is even, to each pair $(i,\underline{\alpha})$ with $\underline{\alpha} \in \mathcal{R}$ and $i \in \{0,1\}$, we attach a function 
$\phi_{i,\underline{\alpha}} : D^\times \rightarrow \mathbb C \text{ supported on } H \cdot \Delta_{i,\underline{\alpha}}$ 
such that $\phi_{i,\underline{\alpha}}$ takes value $\psi^\#(h)$ on the element $h \cdot \Delta_{i,\underline{\alpha}}$ for each $h \in H$.
 
We also define, for $\underline{\alpha}=(\alpha_1, 0, \alpha_3, 0, \dots, \alpha_{m-1}) \in \mathcal{R}^\prime$,
\[\Delta^\prime_{i,\underline{\alpha}}=(\zeta \varpi)^i \cdot (1 + \sum\limits_{r=1}^{m-1} \alpha_r \zeta \varpi^r),\]
and corresponding functions
$\phi^\prime_{i,\underline{\alpha}} : D^\times \rightarrow \mathbb C \text{ supported on } H \cdot \Delta^\prime_{i,\underline{\alpha}}$ 
such that $\phi^\prime_{i,\underline{\alpha}}$ takes value $\psi^\#(h)$ on the element $h \cdot \Delta^\prime_{i,\underline{\alpha}}$ for each $h \in H$. 

When $m$ is odd, recall that
$H^{\prime \prime} = F^\times (1 +\mathcal P_M) ( 1 + \mathcal P_D^{m+1})$ and $H^\prime = F^\times (1 +\mathcal P_M) ( 1 + \mathcal P_D^m)$. 
Let
\[H_1^\prime = F^\times (1 +\mathcal{P}_M) (1+\mathcal P_L^m)(1 +\mathcal{P}_D^{m+1}).\] 
Note that the set $\mathcal{D} = \{1+ \alpha \varpi^m \mid \alpha \in \mathbb F\}$ forms a set of right coset representatives for the quotient $H^\prime/H_1^\prime$. 

For each $\alpha \in \mathbb F$, we define $f_\alpha: H \rightarrow \mathbb C$ with support $H_1^\prime (1 + \alpha \varpi^m)$ such that it takes the value $\psi^\prime(h_1)$ on $h_1(1 + \alpha \varpi^m)$ for each $h_1 \in H_1^\prime$. Note that $f_\alpha$'s form a basis for $W$. Observe that  
\begin{equation}\label{action1}
(1 + \alpha \varpi^m) \cdot f_\beta = \theta_\psi(1 + \alpha \varpi^m) f_\beta = \psi[1 +(\beta-\alpha)\alpha \varpi^{2m}] f_{\beta-\alpha},
\end{equation}
and 
\begin{equation}\label{action2}
(1 + \alpha \zeta \varpi^m) \cdot f_0 = \theta_\psi(1 + \alpha \zeta \varpi^m) f_0 = \psi^\prime(1 + \alpha \zeta \varpi^m) f_0.
\end{equation}

Now for each tuple $(i, \underline{\alpha}, \alpha_m)$ with $i \in \{0,1\}$, $\underline{\alpha} \in \mathcal{R}^\prime$,  and $\alpha_m \in \mathbb F$, we define two $W$-valued functions as follows. The function
$\phi_{i,\underline{\alpha},\alpha_m}: D^\times \rightarrow W$  is supported on $H \cdot \Delta_{i,\underline{\alpha}}$ with \[\phi_{i,\underline{\alpha},\alpha_m}(h \cdot \Delta_{i,\underline{\alpha}})= \theta_\psi^\#(h)(f_{\alpha_m})\]
and the function $\phi_{i,\underline{\alpha},\alpha_m}^\prime: D^\times \rightarrow W$ is supported on $H \cdot \Delta_{i,\underline{\alpha}}^\prime $ with 
\[\phi_{i,\underline{\alpha},\alpha_m}^\prime (h \cdot \Delta_{i,\underline{\alpha}}^\prime)= \theta_\psi^\#(h)(f_{\alpha_m})\]
for each $h \in H$.

In this case, we are going to exhibit a number of equivariant vectors for characters of $K^\times$ and $L^\times$. 

\subsection{Restricted to $K^\times$}

We first deal with $K = F(\varpi)$. 

\begin{proposition}\label{prop4}
Let $\chi$ be a smooth character of $K^\times$ with 
\[\chi|_{H_1^\prime \cap K^\times} =  \chi|_{F^\times (1+\mathcal P_K^{m+1})}  =  \psi^\prime|_{F^\times (1+\mathcal P_K^{m+1})},\]
when $m$ is odd, and 
\[\chi|_{H \cap K^\times} = \chi|_{F^\times (1 +\mathcal P_K^{m})} = \theta_\psi^\#|_{F^\times (1 + \mathcal P_K^{m})},\]
when $m$ is even. Then $\chi$ occurs with multiplicity one in the restriction of $\pi_\psi$ to $K^\times$. The vector 
\[v_\chi =
\begin{cases}
\sum\limits_{\substack{i=0,1 \\  \underline{\alpha} \in \mathcal{R}^\prime, \alpha_m \in \mathbb F }} \chi(\Delta_{i,\underline{\alpha}})~ \chi(1 + \alpha_m \varpi^m) \cdot  \phi_{i,\underline{\alpha}, \alpha_m}  &\text{if $m$ is odd,} \\\
\sum\limits_{\substack{i=0,1 \\  \underline{\alpha} \in \mathcal{R}^\prime  }} \chi(\Delta_{i,\underline{\alpha}}) \cdot  \phi_{i,\underline{\alpha}}    &\text{if $m$ is even,} 
\end{cases}\]
lies in the $\chi$-isotypic component of the representation $\pi_\psi$.
\end{proposition}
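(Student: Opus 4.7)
The plan is to follow the template of Propositions \ref{prop1}, \ref{prop2}, and \ref{prop3}: first verify multiplicity one from Mackey theory, then check directly that $\pi_\psi(z) v_\chi = \chi(z)v_\chi$ for every $z \in K^\times$. For the multiplicity statement, the Mackey decomposition
\[\pi_\psi|_{K^\times} = \bigoplus_{g \in K^\times \backslash D^\times / H} \mathrm{Ind}_{K^\times \cap gHg^{-1}}^{K^\times} (\theta_\psi^\#)^g|_{K^\times \cap gHg^{-1}},\]
together with Frobenius reciprocity and the prescribed restriction condition on $\chi$, isolates a unique double coset contributing $\chi$, with multiplicity one, exactly as in the earlier propositions.

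For the equivariance, I fix a typical $x = h \cdot \Delta_{j_0,\underline{\delta}}$ with $h \in H$, $j_0 \in \{0,1\}$, $\underline{\delta} \in \mathcal R^\prime$, and write a general $z \in K^\times$ in the canonical form $z = a \varpi^{\ell_0}(1 + \sum_{r \geq 1}\beta_r \varpi^r)$ with $a \in F^\times$, $\ell_0 \in \{0,1\}$, $\beta_r \in \mathbb F$. The main algebraic manipulation, analogous to the corresponding step in Proposition \ref{prop2}, is to expand
\[\Delta_{j_0,\underline{\delta}} \cdot z = h_z \cdot \Delta_{j_0+\ell_0,\underline{\delta}^\prime},\]
for explicit $h_z \in H$ and $\underline{\delta}^\prime \in \mathcal R^\prime$, using $\varpi\zeta\varpi^{-1} = \zeta^q$ to organize the mixing coefficients. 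Because $K = F(\varpi)$ and $\underline{\delta} \in \mathcal R^\prime$, the entire product lies in $K^\times$, which forces $\underline{\delta}^\prime \in \mathcal R^\prime$ and $h_z \in H \cap K^\times = F^\times(1 + \mathcal P_K^m)$.

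When $m$ is even, $\theta_\psi^\#$ is a character and the hypothesis $\chi|_{F^\times(1+\mathcal P_K^m)} = \theta_\psi^\#|_{F^\times(1+\mathcal P_K^m)}$ immediately gives $\chi(h_z) = \theta_\psi^\#(h_z)$; the equivariance then reduces to the tautological identity $\chi(\Delta_{j_0,\underline{\delta}}\cdot z) = \chi(h_z \cdot \Delta_{j_0+\ell_0,\underline{\delta}^\prime})$ inside $K^\times$, exactly as in Proposition \ref{prop2}.

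The main obstacle is the $m$ odd case, where $\theta_\psi^\#$ is a $q$-dimensional representation and the hypothesis only constrains $\chi$ on the smaller subgroup $F^\times(1+\mathcal P_K^{m+1})$. Factoring $h_z = h_z^\natural \cdot (1+\alpha\varpi^m)$ with $h_z^\natural \in F^\times(1+\mathcal P_K^{m+1}) \subset H^{\prime\prime}$ and $\alpha \in \mathbb F$, the factor $h_z^\natural$ acts on each $f_{\alpha_m}$ by the scalar $\psi^\prime(h_z^\natural) = \chi(h_z^\natural)$ (since $H^{\prime\prime}$ acts on $W$ through $\psi^{\prime\prime}$), while $1+\alpha\varpi^m$ acts via \eqref{action1} as $f_{\alpha_m} \mapsto \psi[1+(\alpha_m-\alpha)\alpha\varpi^{2m}]\, f_{\alpha_m-\alpha}$. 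After reindexing the sum over $\alpha_m$ by $\alpha_m \mapsto \alpha_m+\alpha$, the equivariance reduces to the cocycle identity
\[\chi(1+\alpha_m\varpi^m)\, \chi(1+\alpha\varpi^m) = \chi(1+(\alpha_m+\alpha)\varpi^m)\cdot \psi[1+\alpha_m\alpha\varpi^{2m}]\]
for all $\alpha_m, \alpha \in \mathbb F$. This is precisely the non-multiplicativity of $\beta \mapsto 1+\beta\varpi^m$ in $K^\times$, and it follows from the factorization $(1+\alpha_m\varpi^m)(1+\alpha\varpi^m) = (1+(\alpha_m+\alpha)\varpi^m)(1+v\varpi^{2m})$ with $v \equiv \alpha_m\alpha \pmod{\mathcal P_K^m}$, combined with the hypothesis $\chi|_{F^\times(1+\mathcal P_K^{m+1})} = \psi^\prime|_{F^\times(1+\mathcal P_K^{m+1})}$ and the compatibility $\psi^\prime(1+v\varpi^{2m}) = \psi(1+v\varpi^{2m})$ coming from the construction of $\psi^{\prime\prime}$. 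Once this identity is in place, the remaining manipulations mirror the $m$ even case.
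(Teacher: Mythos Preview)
Your proposal is correct and follows essentially the same approach as the paper's proof: fix $x = h\cdot\Delta_{i_0,\underline\delta}$, write $z\in K^\times$ in canonical form, factor $\Delta_{i_0,\underline\delta}\cdot z$ back into $H$ times a coset representative, and in the odd case split off the $(1+\epsilon_m\varpi^m)$ factor to handle it via \eqref{action1}. The paper separates the $F^\times$-part as $a\cdot b_0$ and writes $h_0 = h_1\cdot(1+\epsilon_m\varpi^m)$ with $h_1\in H_1'\cap K^\times$, whereas you package everything into $h_z = h_z^\natural\cdot(1+\alpha\varpi^m)$; your cocycle identity is exactly the relation the paper uses implicitly when it rewrites $\chi(1+\alpha_m\varpi^m)\,\psi[1+(\alpha_m-\epsilon_m)\epsilon_m\varpi^{2m}]$ as $\chi(1+\epsilon_m\varpi^m)\,\chi(1+(\alpha_m-\epsilon_m)\varpi^m)$ and reindexes.
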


\begin{proof}
Suppose, $m$ is odd. Fix $x=h \cdot \Delta_{i_0, \underline{\delta}} \in D^\times$, where $h \in H$, $\underline{\delta}=(\delta_1, 0, \delta_3, 0, \dots, \delta_{m-2}, 0) \in \mathcal R$, and $i_0 \in \{0,1\}$. Note that 
\[v_{\chi}(x) = 
\begin{cases}
\sum\limits_{\alpha_m \in \mathbb F} \chi(\Delta_{i_0,\underline{\delta}}) \chi(1 + \alpha_m \varpi^m) \cdot \theta_\psi^\#(h)(f_{\alpha_m}) &\text{if $\underline{\delta} \in \mathcal R^\prime $,} \\
0 &\text{otherwise.}
\end{cases} \]
We now assume that $\underline{\delta} \in \mathcal R^\prime $. Let $z \in K^\times$ and write 
\[z= a \cdot \varpi^{j_0} \cdot (1 + \sum\limits_{\substack{r \geq 1  \\    r \text{ is odd} }}\beta_r  \varpi^r )\]
for some $a \in F^\times$, $j_0 \in \{0,1\}$, and $\beta_r \in \mathbb F$ for all $r$. Then,
\[ \pi_\psi(z) v_{\chi}(x) = v_{\chi}(xz) 
= v_{\chi}\left( ah \cdot \varpi^{i_0+j_0} (1 + \sum\limits_{\substack{r = 1  \\    r \text{ is odd} }}^{m-1} \delta_r  \varpi^r ) \cdot (1 + \sum\limits_{\substack{r \geq 1  \\    r \text{ is odd} }}\beta_r  \varpi^r )\right) .\]
We write  
\[\textbf{y}= \varpi^{i_0+j_0} (1 + \sum\limits_{\substack{r = 1  \\    r \text{ is odd} }}^{m-1} \delta_r  \varpi^r ) \cdot (1 + \sum\limits_{\substack{r \geq 1  \\    r \text{ is odd} }}\beta_r  \varpi^r )\]
as 
\[\textbf{y}= b_0 \cdot h_0 \cdot \Delta_{ i_0 + j_0, \underline{\delta}^\prime},\] 
where
\[b_0 = 1 + \sum\limits_{\substack{r = 2 \\    r \text{ is even} }}^{m-1} \xi_r  \varpi^r \]
with 
\[\xi_r=\sum\limits_{\substack{t=1  \\    t \text{ is odd} }}^{r-1} \delta_{r-t}\beta_t\]
for even $r \leq m-1$, and
\[\underline{\delta}^{\prime} = (\delta_1^{\prime},, 0, \delta_3^\prime, 0, \dots , \delta_{m-2}^{\prime},0) \in \mathcal{R}^{\prime}\]
with 
\[\delta_r^{\prime} = \delta_r + \beta_r  - \sum\limits_{\substack{t =1 \\    t \text{ is odd} }}^{r-2} \delta_t^{\prime} \xi_{r-t}\]
for odd $r \leq m-2$, and
\[h_0 = 1 + \sum\limits_{\substack{j =m  \\    j \text{ is odd} }}^{2m-1} \epsilon_j \varpi^j + \sum\limits_{\substack{k = m +1 \\    k \text{ is even} }}^{2m} \xi_k \varpi^k +  \cdots \]
such that
\[\epsilon_j = \beta_j -\sum\limits_{\substack{t=1  \\    t \text{ is odd} }}^{m-2} \delta_t^\prime \xi_{j-t}- \sum\limits_{\substack{t=m  \\    t \text{ is odd} }}^{j-2} \epsilon_t \xi_{j-t},\]
for odd $j\geq m$, and
\[\xi_k = \sum\limits_{\substack{t=1  \\    t \text{ is odd} }}^{m-2}\delta_t \beta_{k-t} -\sum\limits_{\substack{t=m+1  \\    t \text{ is even} }}^{k-2} \xi_t \xi_{k-t} - \sum\limits_{\substack{t=m  \\    t \text{ is odd} }}^{k-1} \delta_{k-t}^\prime \epsilon_{t}\]
for even $m+1 \leq k \leq 2m$. We take $\delta^\prime_m =0$ in the above. 

Clearly, $b_0 \in F^\times$, and $h_0 \in H \cap K^\times$. We now write 
\[h_0 = h_1 \cdot (1 + \epsilon_m \varpi^m),\]
where 
\[h_1=  1 + \sum\limits_{\substack{j =m+2  \\    j \text{ is odd} }}^{2m-1} \epsilon_j \varpi^j + \sum\limits_{\substack{k = m +1 \\    k \text{ is even} }}^{2m} \xi_k \varpi^k +  \cdots\]
belongs to $H_1^\prime \cap K^\times$.
We get,
\begin{align*}
\pi_\psi(z) v_{\chi}(x) &= v_{\chi} \left( ah \cdot b_0 \cdot h_0 \cdot \Delta_{i_0 + j_0, \underline{\delta}^\prime} \right)  \\
&= \theta_\psi^\#(ah \cdot b_0 \cdot h_0)  v_{\chi} \left( \Delta_{ i_0 + j_0, \underline{\delta}^\prime} \right) \\
&= \psi(ab_0) \chi( \Delta_{ i_0 + j_0, \underline{\delta}^\prime} )  \theta_\psi^\#(h) \theta_\psi(h_1)  \theta_\psi (1 + \epsilon_m \varpi^m) \left[ \sum\limits_{\alpha_m \in \mathbb F} \chi(1 + \alpha_m \varpi^m) f_{\alpha_m}\right]. 
\end{align*}
We now analyse the vector 
\begin{align*}
{\bf w} &= \theta_\psi(h_1)  \theta_\psi (1 + \epsilon_m \varpi^m) \left[ \sum\limits_{\alpha_m \in \mathbb F} \chi(1 + \alpha_m \varpi^m) f_{\alpha_m}\right] \\
&= \theta_\psi(h_1) \left[ \sum\limits_{\alpha_m \in \mathbb F} \chi(1 + \alpha_m \varpi^m) \cdot \psi \left(1 +(\alpha_m-\epsilon_m)\epsilon_m \varpi^{2m}\right) f_{\alpha_m-\epsilon_m} \right]  
\end{align*}		 
where in the last step we have made use of (\ref{action1}). Note that, by our assumption on $\chi$, we have $\theta_\psi(h_1) = \psi^\prime(h_1) =\chi(h_1)$ (since $h_1 \in H_1^\prime \cap K^\times$) and also $\chi$ is trivial on $1+\mathcal P_K^{2m+1}$ (since $\psi^\prime$ is trivial on $1+\mathcal P_D^{2m+1}$). Thus, we get
\begin{align*}
{\bf w} &= \chi (h_1)  \sum\limits_{\alpha_m \in \mathbb F} \chi(1 + \epsilon_m \varpi^m) \cdot \chi (1 +(\alpha_m-\epsilon_m) \varpi^m) f_{\alpha_m-\epsilon_m} \\
&= \chi(h_0)  \sum\limits_{\alpha_m \in \mathbb F} \chi(1+\alpha_m \varpi^m)f_{\alpha_m}
\end{align*}
Therefore,
\begin{align*}		
\pi_\psi(z) v_{\chi}(x) &=   \chi \left(a \cdot b_0 \cdot h_0 \cdot \Delta_{ i_0 + j_0, \underline{\delta}^\prime}  \right) 
\theta_\psi^\#(h) \left[ \sum\limits_{\alpha_m \in \mathbb F} \chi(1+\alpha_m \varpi^m)f_{\alpha_m} \right]\\
&= \chi \left(z \cdot \Delta_{ i_0, \underline{\delta}}  \right) \sum\limits_{\alpha_m \in \mathbb F} \chi(1 + \alpha_m \varpi^m) \cdot \theta_\psi^\#(h)(f_{\alpha_m})  \\
&= \chi(z) \cdot v_\chi(x).
\end{align*}

When $m$ is even, $W = \mathbb C$ and $\theta_\psi^\#$ is a one dimensional character. Now observe that the above proof in the odd case goes through, and in fact is simpler, with $\alpha_m=0$, $f_0 = 1$, and by taking $\phi_{i,\underline{\alpha}} = \phi_{i,\underline{\alpha},0}$.   
\end{proof}

\begin{remark}\label{rmk4}
The number of characters $\chi: K^\times \rightarrow \mathbb C$ satisfying $\chi |_{H_1^\prime \cap K^\times} = \psi^\prime|_{H_1^\prime \cap K^\times}$ is 
\[\left| \frac{K^\times}{F^\times (1 + \mathcal P_K^{m +1} )}   \right| = 2q^{m-\left[\frac{m}{2} \right]} = 2q^\frac{m+1}{2}, \] when $m$ is odd. Similarly, when $m$ is even, the number of characters $\chi: K^\times \rightarrow \mathbb C$ which satisfy $\chi|_{H \cap K^\times} = \theta_\psi^\# |_{H \cap K^\times}$ is 
\[\left| \frac{K^\times}{F^\times (1 + \mathcal P_K^m )}  \right| = 2q^{m-1-\left[\frac{m-1}{2} \right]} = 2q^\frac{m}{2}.\]
\end{remark}

If $\pi_\psi$ has trivial central character then the trivial character $\textbf{1}_{K^\times}$ of $K^\times$ occurs in the restriction $\pi_\psi$ to $K^\times$ and putting $\chi=1$ we get the following corollary.

\begin{corollary}\label{cor4}
 If $\pi_\psi$ has trivial central character then the vector 
 \[v_K = \begin{cases}
 \sum\limits_{\substack{i=0,1 \\  \underline{\alpha} \in \mathcal{R}^\prime, \alpha_m \in \mathbb F }}   \phi_{i,\underline{\alpha}, \alpha_m}  &\text{if $m$ is odd,}  \\
\sum\limits_{\substack{i=0,1 \\  \underline{\alpha} \in \mathcal{R}^\prime  }} \phi_{i,\underline{\alpha}}  &\text{if $m$ is even,} 
\end{cases}\]
is a $K^\times$-fixed vector in the representation $\pi_\psi$.
\end{corollary}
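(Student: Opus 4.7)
The plan is to apply Proposition~\ref{prop4} with $\chi = \mathbf{1}_{K^\times}$, the trivial character of $K^\times$, and simply read off the resulting formula for $v_\chi$. The argument breaks into two parts: checking that $\mathbf{1}_{K^\times}$ satisfies the compatibility hypothesis of Proposition~\ref{prop4}, and then specializing the explicit formula.

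For the compatibility, one needs $\psi^\prime$ (when $m$ is odd) to restrict trivially to $F^\times(1+\mathcal{P}_K^{m+1})$, respectively $\theta_\psi^\#$ (when $m$ is even) to restrict trivially to $F^\times(1+\mathcal{P}_K^m)$. The hypothesis that $\pi_\psi$ has trivial central character is exactly $\psi|_{F^\times}=1$, which handles the $F^\times$ component in both cases. For the level part, I would unwind the construction from Section~\ref{prelim}: by definition $\psi_c(1+x) = \psi_0(\mathrm{Tr}_{D/F}(cx))$ for $x$ in the relevant ideal of $\mathcal{O}_D$, and $\psi^\prime$ (resp.\ $\theta_\psi^\#$) restricts to $\psi_c$ on $1+\mathcal{P}_D^{m+1}$. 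Since $c$ arises from a character of the \emph{unramified} extension $M$ while $K$ is ramified, a direct valuation computation shows that for $x \in \mathcal{P}_K^{m+1}$ (resp.\ $\mathcal{P}_K^m$) the element $cx$ has reduced trace lying in $\mathcal{O}_F$, on which $\psi_0$ is trivial.

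Once compatibility is in hand, Proposition~\ref{prop4} asserts that $\mathbf{1}_{K^\times}$ occurs with multiplicity one in $\pi_\psi|_{K^\times}$ and gives an explicit $K^\times$-fixed vector $v_\chi$. Substituting $\chi = 1$ into this formula, every factor $\chi(\Delta_{i,\underline{\alpha}})$ and $\chi(1+\alpha_m\varpi^m)$ becomes $1$, and the sum collapses immediately to the two expressions displayed in the statement of the corollary (one per parity of $m$).

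The main (and really only) obstacle is the level-part compatibility check; the rest is pure substitution. This mirrors exactly the way Corollary~\ref{cor1} is deduced from Proposition~\ref{prop1}, so I would expect the proof in the paper to be a single sentence asserting the compatibility and invoking Proposition~\ref{prop4}.
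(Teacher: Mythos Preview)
Your approach matches the paper's exactly: the corollary is obtained by setting $\chi=\mathbf 1_{K^\times}$ in Proposition~\ref{prop4}, and the paper's argument is indeed the single sentence you predicted.

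One caveat on your compatibility sketch: the level part does not follow from a pure valuation count. Knowing only $c\in\mathcal P_M^{-m-1}$ gives $\mathrm{Tr}_{M/F}(c)\in\mathcal P_F^{-m-1}$, and for $x\in\mathcal P_K^{m+1}$ one finds $\mathrm{Tr}_{D/F}(cx)=x_0\,\mathrm{Tr}_{M/F}(c)$ with $x_0\in\mathcal P_F^{\lceil (m+1)/2\rceil}$, which is not enough to land in $\mathcal O_F$. The trivial central character hypothesis has to be used a second time: applying $\psi|_{F^\times}=1$ on $1+\mathcal P_F^{[(m+2)/2]}$ together with the defining relation $\psi(1+x)=\psi_M(cx)$ forces the sharper bound $\mathrm{Tr}_{M/F}(c)\in\mathcal P_F^{-[(m+2)/2]}$, and \emph{that} makes the reduced trace integral. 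So the $F^\times$ part and the level part are not as cleanly separated as your outline suggests; otherwise the plan is sound.
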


\subsection{Restricted to $L^\times$}

We take $L = F(\zeta \varpi)$. 

\begin{proposition}\label{prop5}
Let $\lambda$ be a smooth character of $L^\times$ with 
\[\lambda|_{H_1^\prime \cap L^\times} =  \lambda|_{F^\times (1+\mathcal P_L^m)}  =  \psi^\prime|_{F^\times (1+\mathcal P_L^m)},\]
when $m$ is odd, and 
\[\lambda|_{H \cap L^\times} = \lambda|_{F^\times (1 +\mathcal P_L^m)} =  \theta_\psi^\#|_{F^\times (1 + \mathcal P_L^m)},\]
when $m$ is even. Then $\lambda$ occurs with multiplicity one in the restriction of $\pi_\psi$ to $L^\times$. The vector 
\[v_\lambda = \begin{cases}
\sum\limits_{\substack{i=0,1 \\  \underline{\alpha} \in \mathcal{R}^\prime }} \lambda(\Delta_{i,\underline{\alpha}}^\prime) \cdot  \phi^\prime_{i,\underline{\alpha}, 0}  &\text{if $m$ is odd,} \\
\sum\limits_{\substack{i=0,1 \\  \underline{\alpha} \in \mathcal{R}^\prime  }} \lambda(\Delta_{i,\underline{\alpha}}^\prime) \cdot  \phi_{i,\underline{\alpha}}^\prime  &\text{if $m$ is even,}
\end{cases}\]
lies in the $\lambda$-isotypic component of the representation $\pi_\psi$.
\end{proposition}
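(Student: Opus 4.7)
\emph{Proof proposal.} The argument will mirror the proof of Proposition \ref{prop4} with $\varpi$ replaced by $\zeta\varpi$ throughout. The multiplicity one statement and the equality $\lambda|_{F^\times} = \psi|_{F^\times}$ are immediate from Mackey theory applied to $\pi_\psi = \mathrm{Ind}_H^{D^\times} \theta_\psi^\#$, since by hypothesis $\lambda$ agrees on $H \cap L^\times$ (resp.\ on $H_1^\prime \cap L^\times$) with $\theta_\psi^\#$ (resp.\ with $\psi^\prime$). For the equivariance we fix $x = h \cdot \Delta_{i_0, \underline{\delta}}^\prime$ with $h \in H$, $\underline{\delta} \in \mathcal{R}$, and $i_0 \in \{0,1\}$, and note that $v_\lambda(x) = 0$ unless $\underline{\delta} \in \mathcal{R}^\prime$.

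Every $z \in L^\times$ may be written as
\[z = a \cdot (\zeta\varpi)^{j_0} \cdot \left( 1 + \sum\limits_{\substack{r \geq 1 \\ r \text{ odd}}} \beta_r\, \zeta\varpi^r \right),\]
with $a \in F^\times$, $j_0 \in \{0,1\}$, $\beta_r \in \mathbb F$. The computational heart of the argument is to decompose the product $\Delta_{i_0,\underline{\delta}}^\prime \cdot z$ in the form $b_0 \cdot h_0 \cdot \Delta_{i_0 + j_0,\, \underline{\delta}^\prime}^\prime$, with $b_0 \in F^\times$, $h_0 \in H \cap L^\times$, and $\underline{\delta}^\prime \in \mathcal{R}^\prime$. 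The coefficients $\xi_r, \delta_r^\prime, \epsilon_j, \ldots$ are determined inductively by exactly the same manipulations as in the proof of Proposition \ref{prop4}, with the roles of $\varpi$ and $\zeta\varpi$ swapped; I omit this as routine bookkeeping.

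The one conceptual point at which the present proof diverges from that of Proposition \ref{prop4} is in the analysis of the $\theta_\psi^\#$-action on $W$ when $m$ is odd. Here the element $h_0$ contains, modulo $1 + \mathcal P_D^{m+1}$, a factor of the form $1 + \epsilon_m \zeta\varpi^m$; by the identity \eqref{action2}, this acts on $f_0$ by the scalar $\psi^\prime(1 + \epsilon_m \zeta\varpi^m)$, rather than permuting the basis $\{f_\alpha\}$ as the corresponding factor $1 + \epsilon_m \varpi^m$ did via \eqref{action1} in the previous proposition. Consequently no sum over $\alpha_m \in \mathbb F$ is needed in $v_\lambda$: only the vector $\phi_{i,\underline{\alpha},0}^\prime$, whose $W$-value is $f_0$, appears. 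Combining this observation with $\psi(a b_0) = \lambda(a b_0)$ and $\theta_\psi(h_0) \cdot f_0 = \psi^\prime(h_0) f_0 = \lambda(h_0) f_0$, together with the identity $a \cdot b_0 \cdot h_0 \cdot \Delta_{i_0+j_0,\underline{\delta}^\prime}^\prime = \Delta_{i_0,\underline{\delta}}^\prime \cdot z$, gives $\pi_\psi(z) v_\lambda(x) = \lambda(z) v_\lambda(x)$, as required.

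When $m$ is even, $W$ is one-dimensional and $\theta_\psi^\#$ is a character, and the argument simplifies to exactly the analogue of the even case of Proposition \ref{prop4} (taking $\alpha_m = 0$ and $f_0 = 1$). The main obstacle throughout is not any new idea but the careful tracking of the inductively defined coefficients in the decomposition of $\Delta_{i_0,\underline{\delta}}^\prime \cdot z$; once this is set up in parallel with Proposition \ref{prop4}, the conclusion follows.
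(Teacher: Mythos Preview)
Your proposal is correct and follows essentially the same approach as the paper: the paper's own proof is a two-line reference back to Proposition~\ref{prop4}, noting only that one takes $\alpha_m = 0$ and replaces the use of \eqref{action1} by \eqref{action2}, which is precisely the conceptual point you isolate. One small imprecision: you write ``fix $x = h \cdot \Delta_{i_0,\underline{\delta}}^\prime$ with $\underline{\delta} \in \mathcal{R}$'', but $\Delta^\prime$ is only defined for $\underline{\delta} \in \mathcal{R}^\prime$; the correct phrasing is that a general $x$ lies in some coset $H \cdot \Delta_{i_0,\underline{\delta}}$ with $\underline{\delta} \in \mathcal{R}$, and $v_\lambda(x) = 0$ unless this coset equals $H \cdot \Delta_{i_0,\underline{\delta}^\prime}^\prime$ for some $\underline{\delta}^\prime \in \mathcal{R}^\prime$.
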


\begin{proof}
The proof is similar to the proof of Proposition \ref{prop4}. Note that we take $\alpha_m = 0$ and instead of (\ref{action1}) we need to employ (\ref{action2}).	
\end{proof}	

\begin{remark}\label{rmk5}
 The number of characters $\lambda: L^\times \rightarrow \mathbb C$ satisfying $\lambda|_{H_1^\prime \cap L^\times} = \psi^\prime |_{H_1^\prime \cap L^\times}$ or $\lambda |_{H \cap L^\times} = \theta_\psi^\# |_{H \cap L^\times}$ is 
 \[\left| \frac{L^\times}{F^\times (1 + \mathcal P_L^{\left[\frac{m-1}{4} \right] +1} )}   \right| = 2q^{m-1-\left[\frac{m-1}{2} \right]}.\]
\end{remark}

If $\pi_\psi$ has trivial central character then the trivial character $\textbf{1}_{L^\times}$ of $L^\times$ occurs in the restriction $\pi_\psi$ to $L^\times$ and putting $\lambda =1$ we get the following corollary.

\begin{corollary}\label{cor5}
If $\pi_\psi$ has trivial central character then the vector 
\[v_L = \begin{cases}
\sum\limits_{\substack{i=0,1 \\  \underline{\alpha} \in \mathcal{R}^\prime}}   \phi_{i,\underline{\alpha},0}^\prime  &\text{if $m$ is odd,} \\
\sum\limits_{\substack{i=0,1 \\  \underline{\alpha} \in \mathcal{R}^\prime  }} \phi_{i,\underline{\alpha}}^\prime  &\text{if $m$ is even,} 
\end{cases}\]
is an $L^\times$-fixed vector in the representation $\pi_\psi$.
\end{corollary}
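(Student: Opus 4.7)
The plan is to deduce this corollary as an immediate specialization of Proposition \ref{prop5} at the trivial character $\lambda = \mathbf{1}_{L^\times}$ of $L^\times$, exactly parallel to how Corollary \ref{cor4} was derived from Proposition \ref{prop4}.

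First I would verify that $\lambda = \mathbf{1}_{L^\times}$ satisfies the hypothesis of Proposition \ref{prop5}; namely, that the trivial character of $L^\times$ agrees with $\psi^\prime$ (when $m$ is odd) or with $\theta_\psi^\#$ (when $m$ is even) on the relevant intersection $H_1^\prime \cap L^\times$ or $H \cap L^\times$, both of which equal $F^\times(1+\mathcal P_L^m)$. Triviality on the central factor $F^\times$ is immediate from our standing hypothesis that $\pi_\psi$ has trivial central character, since that central character is $\psi|_{F^\times}$. Triviality on the principal unit subgroup $1+\mathcal P_L^m$ reduces to the valuation check that $\psi_D(c \cdot x) = 1$ for $x \in \mathcal P_L^m$; given $c \in \mathcal P_M^{-m-1}$ and the ramification behaviour of $L$ over $F$, one sees that the reduced trace $\mathrm{Tr}_{D/F}(cx)$ lies in $\mathcal O_F$ and hence is killed by the additive character $\psi_0$.

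Once the hypothesis has been verified, Proposition \ref{prop5} guarantees that the resulting vector $v_\lambda$ is $\lambda$-equivariant. Substituting $\lambda = \mathbf{1}_{L^\times}$, every coefficient $\lambda(\Delta_{i,\underline{\alpha}}^\prime)$ in that formula becomes $1$, and the sum collapses to precisely the expression displayed for $v_L$ in the statement. Since a $\mathbf{1}_{L^\times}$-equivariant vector is exactly an $L^\times$-fixed vector, this finishes the argument.

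The only non-tautological step is the valuation check above. It is routine and mirrors the analogous check that was implicit for $K^\times$ in Corollary \ref{cor4}, so it does not present a genuine obstacle; indeed, the parallel between the $K^\times$ and $L^\times$ situations already manifested itself in the proof of Proposition \ref{prop5} itself, which was obtained from that of Proposition \ref{prop4} by replacing the use of \eqref{action1} with \eqref{action2}.
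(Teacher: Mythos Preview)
Your proposal is correct and follows exactly the paper's approach: the paper derives Corollary~\ref{cor5} in one line by specializing Proposition~\ref{prop5} at $\lambda=\mathbf 1_{L^\times}$, and you do the same while supplying the (omitted) verification that the trivial character meets the hypothesis. One small caveat: in the odd $m$ case the restriction $\psi'|_{1+\mathcal P_L^m}$ is not literally given by $\psi_D(cx)$ on the top layer $1+\alpha\zeta\varpi^m$ (since $\psi'$ is a chosen extension of $\psi''$ to $H_1'$, not $\psi_c$ itself there), so your ``valuation check'' as phrased applies directly only to $1+\mathcal P_L^{m+1}$; but this is easily patched by noting that one may take the extension $\psi'$ to be trivial on those representatives, which is all that is needed.
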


\section{Correlation Coefficients}\label{correlation}

In this section we prove the main theorem of this paper on correlation coefficients. As earlier, we have
\[M = F(\zeta), K = F(\varpi), L = F(\zeta\varpi).\] We take $G = D^\times/F^\times$ and consider given embeddings of $M^\times, K^\times$, and $L^\times$ in $G$. By abuse of notation, we denote the corresponding subgroups by $M, K$, and $L$, as this does not cause any confusion. We are interested in the correlation coefficients 
\[c(\pi;M,K), c(\pi;M,L), c(\pi;K,L)\]
for an irreducible representation $\pi$ of $G$.  

As mentioned in the introduction, our methods are straightforward. We work with a given model of $\pi=\pi_\psi$ and we have explicitly written down the invariant vectors for $M,K$, and $L$ in his model in Sections \ref{ramified} and \ref{unramified}. Now, in order to compute the correlation coefficients of the main theorem, we fix a $G$-invariant inner product on $\pi_\psi$, normalize the three invariant vectors to those of unit length in this inner product, and compute the inner product between any two of these vectors.   

The $G$-invariant inner product is indeed the obvious one. Let $G$ be a compact group with a closed subgroup $H$. Suppose $(\sigma, W)$ is an irreducible unitary representation of $H$ with respect to an inner product $\langle \cdot, \cdot \rangle_\sigma$. Then the induced representation ${\rm Ind}_H^G \sigma$ is unitary with respect to the inner product 
\[\langle \phi_1, \phi_2 \rangle = \int\limits_G \langle \phi_1(g) , \phi_2(g) \rangle_\sigma dg,\] 
where $dg$ is a Haar measure on $G$.

Recall that in Section \ref{ramified} and also when $m$ is even in Section \ref{ramified}, the inducing representation $\sigma$ is one dimensional, whereas when $m$ is odd in Section \ref{ramified}, the inducing representation $\sigma$ has dimension $q$.  

\subsection{Ramified case}

We are in the context of Section \ref{ramified}, and thus $\pi_\psi$ is an irreducible representation of $D^\times$ with odd conductor $m+2$, where $m$ is an odd positive integer. We further assume that it has trivial central character. Recall that then it always has a fixed vector for $M$ (cf. Corollary \ref{cor1}), it has a fixed vector for $K$ precisely when $q \equiv 3 \mod 4$ (cf. Corollary \ref{cor2}), and it has a fixed vector for $L$ precisely when $q \equiv 1 \mod 4$ (cf. Corollary \ref{cor3}). Observe that the corresponding unit vectors are:

\[\hat{v}_M =  \frac{1}{\sqrt{(q+1) q^{[\frac{m-1}{4}]}} } \sum\limits_{\substack{\underline{\alpha} \in S^\prime  \\    0 \leq i \leq q }} f_{\underline{\alpha}, i},\]
\[ \hat{v}_K = \frac{1}{\sqrt{2 q^{[\frac{m+1}{4}]}} }  \sum\limits_{\underline{\alpha} \in S^{\prime \prime} } \left[ f_{\underline{\alpha}, t_0} + \psi(\varpi) f_{\underline{\alpha}, 3t_0} \right], 
\hat{v}_L  =  \frac{1}{\sqrt{2 q^{[\frac{m+1}{4}]}} } \sum\limits_{\underline{\alpha} \in S^{\prime \prime} } \left[ f_{\underline{\alpha}, r_0} + \psi(\varpi) f_{\underline{\alpha}, 3r_0+1} \right].\]

\begin{proof}[Proof of Theorem \ref{thm-odd}]
First assume $q \equiv 3 \mod 4$ and in this case we only need to compute $c(\pi;M,K)$. This is given by
\begin{align*}
c(\pi;M,K)] &= |\left\langle \hat{v}_M, \hat{v}_K \right\rangle|^2\\
&= \frac{1}{2 (q+1) q^{\frac{m-1}{2}}} \cdot |\left\langle  f_{\underline{0}, t_0} + f_{\underline{0}, 3t_0}, f_{\underline{0}, t_0} + \psi(\varpi) f_{\underline{0}, 3t_0} \right\rangle|^2 \\
&=  \frac{|1 + \psi(\varpi)|^2}{2(q+1)q^{\frac{m-1}{2}}} \\ 
&= \frac{|1 + \psi(\varpi)|^2}{2 \dim \pi} \\
&=
\begin{cases}
\frac{2}{\dim \pi}  &\text{if $\psi(\varpi) = 1$,} \\
0  &\text{if $\psi(\varpi) = -1$.}
\end{cases}
\end{align*}

Now let $q \equiv 1 \mod  4$ and we need to compute $c(\pi;M,L)$. This is given by 
\begin{align*}
c(\pi; M, L) &= |\left\langle \hat{v}_M, \hat{v}_L \right\rangle|^2\\
&= \frac{1}{2 (q+1) q^{\frac{m-1}{2}}} \cdot |\left\langle  f_{\underline{0}, r_0} + f_{\underline{0}, 3r_0+1}, f_{\underline{0}, r_0} + \psi(\varpi) f_{\underline{0}, 3r_0+1} \right\rangle|^2 \\
&=  \frac{|1 + \psi(\varpi)|^2}{2(q+1)q^{\frac{m-1}{2}}} \\ 
&= \frac{|1 + \psi(\varpi)|^2}{2 \dim \pi} \\
&=
\begin{cases}
\frac{2}{\dim \pi}  &\text{if $\psi(\varpi) = 1$,} \\
0  &\text{if $\psi(\varpi) = -1$.}
\end{cases}
\end{align*}
\end{proof}

\begin{remark}\label{extra-proof}
Similarly, by making use of Remark \ref{extra-k} and Remark \ref{extra-l}, it is easy to compute the following correlation coefficients. For $q \equiv 3$ mod $4$, we have 
\[c(\pi; K_{s}, K_{s^\prime}) = 0\] for $s \neq s^\prime$, and  
\[c(\pi; M, K_{s})=  \frac{|1 + \psi(\varpi)|^2}{2 \dim \pi}.\]
For $q \equiv 1$ mod $4$, we have 
\[c(\pi; L_{s}, L_{s^\prime}) = 0\] for $s \neq s^\prime$, and  
\[c(\pi; M, L_{s})=  \frac{|1 + \psi(\varpi)|^2}{2 \dim \pi}.\]
\end{remark}

\subsection{Unramified case}

We are in the context of Section \ref{unramified}, and thus $\pi_\psi$ is an irreducible representation of $D^\times$ with even conductor $2m+2$. There are two cases to consider, namely (i) $m$ is even and (ii) $m$ is odd. We further assume that $\pi_\psi$ has trivial central character. Thus, $\pi_\psi$ has fixed vectors for both $K$ (cf. Corollary \ref{cor4}) and $L$ (cf. Corollary \ref{cor5}). Observe that the corresponding unit vectors are:
\begin{align*}
\hat{v}_K &= \frac{1}{\sqrt{2 q^{\frac{m}{2}}} } \sum\limits_{\substack{i=0,1 \\  \underline{\alpha} \in \mathcal{R}^\prime  }} \phi_{i,\underline{\alpha}}, \\
\hat{v}_L &= \frac{1}{\sqrt{2 q^{\frac{m}{2}}} } \sum\limits_{\substack{i=0,1 \\  \underline{\alpha} \in \mathcal{R}^\prime  }} \phi_{i,\underline{\alpha}}^\prime,
\end{align*}
when $m$ is even, and
\begin{align*}
\hat{v}_K &= \frac{1}{\sqrt{2 q^{\frac{m+1}{2}}} } \sum\limits_{\substack{i=0,1 \\  \underline{\alpha} \in \mathcal{R}^\prime ,\alpha_m \in \mathbb F }} \phi_{i,\underline{\alpha}, \alpha_m}, \\
\hat{v}_L &= \frac{1}{\sqrt{2 q^{\frac{m-1}{2}}} } \sum\limits_{\substack{i=0,1 \\  \underline{\alpha} \in \mathcal{R}^\prime }} \phi_{i,\underline{\alpha}, 0}^\prime,
\end{align*}
when $m$ is odd. 

\begin{proof}[Proof of Theorem \ref{thm-even}]
Noting that
\[\phi_{0,\underline{0}}^\prime= \phi_{0,\underline{0}} ~\&~ \phi_{1,\underline{0}}^\prime= \psi(\zeta^{-1}) \cdot \phi_{1,\underline{0}},\]
and
\[\phi_{0,\underline{0},\alpha}^\prime= \phi_{0,\underline{0},\alpha } ~\&~ \phi_{1,\underline{0},\alpha}^\prime= \psi(\zeta^{-1}) \cdot \phi_{1,\underline{0}, \alpha },\]
we get, when $m$ is even,
\begin{align*}
c(\pi; K, L) &= |\left\langle \hat{v}_K, \hat{v}_L \right\rangle|^2\\
&= \frac{1}{4 q^{m}} \cdot |\left\langle  \phi_{0,\underline{0}} + \phi_{1,\underline{0}}, \phi_{0,\underline{0}} + \psi(\zeta^{-1}) \phi_{1,\underline{0}} \right\rangle|^2 \\
&= \frac{|1 + \psi(\zeta)|^2}{4 q^{m}}\\
&= \frac{|1 + \psi(\zeta)|^2}{2 \dim \pi},
\end{align*}
and when $m$ is odd,
\begin{align*}
c(\pi; K, L) &= |\left\langle \hat{v}_K, \hat{v}_L \right\rangle|^2 \\
&= \frac{1}{4 q^{m}} \cdot |\left\langle  \sum\limits_{\alpha \in \mathbb F} \phi_{0,\underline{0}, \alpha} + \sum\limits_{\alpha \in \mathbb F} \phi_{1,\underline{0}, \alpha}, \phi_{0,\underline{0}, 0} + \psi(\zeta^{-1})\phi_{1,\underline{0},0} \right\rangle |^2 \\
&= \frac{|1 + \psi(\zeta)|^2}{4 q^{m}}\\
&= \frac{|1 + \psi(\zeta)|^2}{2 \dim \pi}. 
\end{align*}
\end{proof}

\subsection{Compatibility with Schur orthogonality relations}\label{schur}

In this section, we verify that Theorem \ref{thm-odd} and Theorem \ref{thm-even} agree with the identity \cite[Equation (1) on p. 502]{aj22} 
\[\sum_{\pi \in \widehat{G}} \dim \pi \cdot c(\pi;H,K) = \frac{|G||H \cap K|}{|H||K|},\]
which is valid for any finite group $G$ with two multiplicity free subgroups $H$ and $K$. This above identity follows easily from \cite[Lemma 2.1]{aj22} by Schur orthogonality relations. 

The idea, as mentioned in Section \ref{intro}, is to work with the finite group $G_c = \frac{D^\times}{F^\times(1+\mathcal P_D^{c-1})}$, and the corresponding embeddings $M_c, K_c$, and $L_c$ of tori in $G_c$. Note that any two of these subgroups intersect trivially. The cardinalities of these groups are given by
\[|G_c| = \begin{cases}
2(q+1)q^{3j-1} &\text{if $c=2j+1$,} \\
2(q+1)q^{3j} &\text{if $c=2j+2$},
\end{cases}\] and
\[|M_c| = (q+1)q^{\left[\frac{c-2}{2} \right]}, |K_c|=|L_c| = 2q^{(c-2)-\left[\frac{c-2}{2}\right]}.\]

We need to know the number of irreducible representations of $D^\times$ of a given conductor with trivial central character and this is given by \cite[Proposition 3.5 and Remark on p. 189]{tun78}:
\[\mbox{Number of~} \pi \mbox{~of conductor~} c = \begin{cases}
\frac{q^2-1}{2}q^{\frac{c-4}{2}} &\text{if $c$ is even and $c \geq 4$,} \\
2(q-1)q^{\frac{c-3}{2}} &\text{if $c$ is odd and $c \geq3$.}
\end{cases}\]
Note that the only representation of conductor $1$ is the trivial representation and the number of irreducible representations of conductor $2$ is the number of two dimensional representations of the dihedral group $D_{2(q+1)}$ which is $\frac{q-1}{2}$. 

We also note that many correlation coefficients vanish because of lack of an invariant vector. More precisely, when $\pi$ has odd conductor, $c(\pi; M, K) = 0$ when $q \equiv 1 \mod 4$, $c(\pi; M, L) = 0$ when $q \equiv 3 \mod 4$,  and $c(\pi; K, L) = 0$ always (cf.  Remark \ref{trivial}). When $\pi$ has even conductor, $c(\pi; M, K) = c(\pi; M, L) = 0$ (cf. Remark \ref{trivial1}).

We first verify the identity involving $c(\pi; M, K)$ and the verification for the one involving $c(\pi;M, L)$ being similar. Only representations with odd conductor, say $c = 2j+1$, contribute to the sum on the left hand side. Half the representations of a given odd conductor arise from $K$ and the other half from $L$. Those representations arising from $L$ do not contribute if $q \equiv 3 \mod 4$ and those arising from $K$ do not contribute if $q \equiv 1 \mod 4$.  And amongst the representations $\pi_\psi$ that can potentially contribute to the sum, precisely half of these contribute non-trivially, namely $\frac{2}{\dim \pi}$, and the other half do not contribute, since half the time $\psi(\varpi) = 1$ and half the time $\psi(\varpi) = -1$. Thus,
\begin{align*}
\sum_{\pi \in \widehat{G}} \dim \pi \cdot c(\pi; M, K) &= 1 + \sum_{i=3}^c \frac{2(q-1)q^{\frac{i-3}{2}}}{4} \cdot \dim \pi \cdot \frac{2}{\dim \pi} \\
&= 1+ (q-1) \left(1+q+\dots+q^{\frac{c-3}{2}} \right) \\
&= q^{\frac{c-1}{2}} = q^j = \frac{2(q+1)q^{3j-1}}{(q+1)q^{j-1} \cdot 2q^j} \\
&= \frac{|G_c|}{|M_c||K_c|}.
\end{align*}

Next we verify the identity involving $c(\pi; K, L)$. This time, only representations with even conductor, say $c = 2j+2$, will matter. Note that
\[\sum_\psi |1+\psi(\zeta)|^2 = \sum_\psi (2+\psi(\zeta)+\psi(\zeta^{-1})) = 
\begin{cases}
q-1 &\text{if $i=0$,} \\
(q^2-1)q^{i-1} &\text{if $i \geq 1$,} 
\end{cases}
\]
where the sum is over minimal characters of $M^\times$ of conductor $i+1$. We have
\begin{align*}
\sum_{\pi \in \widehat{G}} \dim \pi \cdot c(\pi; K, L) &= 1 + \frac{q-1}{2} + \sum_{i=1}^j \frac{q^2-1}{2}q^{i-1} \\
&= \frac{q+1}{2} q^j = \frac{2(q+1)q^{3j}}{2q^j \cdot 2q^j} \\
&=  \frac{|G_c|}{|K_c||L_c|}.
\end{align*}

\subsection{Proofs of Theorem \ref{thm-main} and Theorem \ref{thm-main-1}}\label{proof-main}

Let $\pi$ be an irreducible representation of $D^\times$ which has fixed vectors for some tori, it follows that $\omega_\pi = 1$. In particular, 
\[\varepsilon(\pi,\Psi_a) = \omega_\pi(a) \varepsilon(\pi,\Psi) = \varepsilon(\pi,\Psi)\]
is independent of the the additive character $\Psi$ of $F$, so we denote the epsilon factor by $\varepsilon(\pi)$. We have $\pi \equiv \pi^\vee$, since $\omega_\pi = 1$.  It now follows from the identity
\[\varepsilon(\pi)\varepsilon(\pi^\vee) = \omega_\pi(-1)\] that $\varepsilon(\pi) \in \{\pm 1\}$. 

Suppose $\pi = \pi_\psi$ is constructed from a minimal (primordial in the terminology of \cite{gk80}) character $\psi$ of $\Omega^\times$ as in Section \ref{prelim}, where $\Omega$ is a quadratic extension of $F$ . Then, by \cite[Theorem 3.7 and Proposition 4.5]{gk80}, we have
\[\varepsilon(\pi_\psi,\Psi) = (-1)^{f(\psi)}\varepsilon(\psi, \Psi \circ {\rm Tr}_{\Omega/F}).\] 
Note that $\psi|_{F^\times} = \omega_\pi = 1$, and therefore by \cite[Theorem 3]{fq73}, we get
\[\varepsilon(\psi,\Psi \circ {\rm Tr}_{\Omega/F}) = \psi(\Delta),\]
where $\Delta$ is any element of $\Omega^\times$ such that Tr$_{\Omega/F}(\Delta) = 0$.

Note that we may take $\Omega = K$ and $\Delta = \varpi$ when $\pi$ has odd minimal conductor. Note also that $f(\psi)$ is even in this case. It follows that
\[\varepsilon(\pi) = \psi(\varpi).\]
 It is easy to see either from the character formulae for irreducible representations of $D^\times$ or from Tunnell's result \cite{tun83,sai93} that such a $\pi$ always has a non-trivial $M$-fixed vector and it has a non-trivial $K$-fixed (resp. $L$-fixed) vector when $q \equiv 3 \mod 4$ (resp. $q \equiv 1 \mod 4$). Now Theorem \ref{thm-main} follows from Theorem \ref{thm-odd}.

Similarly, Theorem \ref{thm-main-1} follows from Theorem \ref{thm-even} and Remark \ref{rmk-iso-1} by noting that $\Delta = \zeta^{\frac{q+1}{2}}$ when $\Omega = M$ and in this case we have $f(\psi) = f(\pi)/2$.  

\section*{Acknowledgements}

The authors are grateful to Dipendra Prasad for suggesting several ideas, many fruitful discussions, and for his encouragement for this work. The second author thanks the Institute of Eminence Cell at IIT Bombay for supporting his postdoctoral position with an Institute Post-doctoral Fellowship.


\end{document}